\documentclass{amsart}

\usepackage{amssymb,amsfonts}
\usepackage[all,arc]{xy}
\usepackage{enumerate}
\usepackage{mathrsfs}
\usepackage{tikz}
\usetikzlibrary{matrix,arrows}

\newtheorem{thm}{Theorem}[section]
\newtheorem{cor}[thm]{Corollary}
\newtheorem{prop}[thm]{Proposition}
\newtheorem{lem}[thm]{Lemma}

\theoremstyle{definition}
\newtheorem{defn}[thm]{Definition}

\theoremstyle{remark}

\begin{document}

\title{Sums-of-Squares Formulas over Algebraically Closed Fields}
\author{Melissa Lynn}

\begin{abstract}
In this paper, we consider whether existence of a sums-of-squares formula depends on the base field. We reformulate the question of existence as a question in algebraic geometry. We show that, for large enough $p$, existence of sums-of-squares formulas over algebraically closed fields is independent of the characteristic. We make the bound on $p$ explicit, and we prove that the existence of a sums-of-squares formula of fixed type over an algebraically closed field is theoretically (though not practically) computable.
\end{abstract}

\maketitle

\section{Introduction}

Let $F$ be a field of characteristic not $2$. A \emph{sums-of-squares formula of type $[r,s,n]$ over $F$} is an identity of the form
\[
(x_1^2+\cdots + x_r^2)(y_1^2+\cdots +y_s^2)=z_1^2+\cdots +z_n^2
\]
where each $z_i$ is a bilinear expression in the $x$'s and $y$'s over $F$.

The existence of sums-of-squares formulas has been extensively studied since they arose in relation to real normed division algebras, and Hurwitz \cite{Hurwitz1} \cite{Hurwitz2} posed the general question: for what $r,s,n$ does a sums-of-squares formula of type $[r,s,n]$ exist over a field $F$?

Sums-of-squares formulas were used to prove Hurwitz's theorem that the only real normed division algebras are the real numbers, complex numbers, quaternions, and octonions. They continue to be of interest for their relationship to problems in topology and geometry: they provide immersions of projective space into Euclidean space, and they provide Hopf maps. Over arbitrary fields, sums-of-squares formulas provide examples of compositions of quadratic forms.

A detailed treatment of the historical development of the subject and past results can be found in Shapiro's book \cite{ShapiroBook}.

Whether existence of a sums-of-squares formula depends on the base field remains an open question. Existence over arbitrary fields is particularly interesting, because formulas over finite fields can be found using computational methods, and these formulas could then yield formulas in the classical characteristic 0 setting.

Most results on the existence of sums-of-squares formulas have been obtained only for fields of characteristic $0$. For some very special cases of $r,s,n$, Adem \cite{Adem1} \cite{Adem2} and Yuzvinsky \cite{Yuzvinsky} have settled the question of existence over arbitrary fields. For fixed $r$ and $s$, topological and geometric considerations have provided lower bounds on the smallest $n$ for which a sums-of-squares formula of type $[r,s,n]$ can exist over $\mathbb{R}$. Recently, Dugger and Isaksen \cite{Dugger1} \cite{Dugger2} \cite{Dugger3} show these lower bounds are valid over any field $F$, improving on the results of Shapiro and Szyjewski \cite{SS}. Xie \cite{Xie} also contributed in this vein.

In this paper, we provide a new formulation of the question of existence of sums-of-squares formulas as a question in algebraic geometry, defining the variety of sums-of-squares formulas. We use this reformulation to prove that in the question of existence for any $r,s,n$ and large enough $p$, existence over algebraically closed fields of characteristic $0$ and $p$ are equivalent. We also prove that existence of sums-of-squares formulas of type $[r,s,n]$ is theoretically computable, although not practically computable.

We begin by showing:

\begin{thm}
If $F\subset K$ are algebraically closed fields, there is a sums-of-squares formula of type $[r,s,n]$ over $F$ if and only if there is one over $K$.
\end{thm}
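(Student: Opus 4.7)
The plan is to reformulate the existence of a sums-of-squares formula as the nonemptiness of an affine algebraic variety $V$ defined over $\mathbb{Z}$, and then to invoke Hilbert's Nullstellensatz. Writing each $z_i$ as $\sum_{j,k} a_{ijk} x_j y_k$, substituting into the identity, expanding, and equating the coefficients of each monomial in $x_1,\ldots,x_r,y_1,\ldots,y_s$ produces a finite list of polynomials $f_1,\ldots,f_m \in \mathbb{Z}[a_{ijk}]$ in the $rsn$ indeterminates $a_{ijk}$. The affine variety $V \subset \mathbb{A}^{rsn}$ they cut out has the defining property that, for any field $F$ of characteristic not $2$, $V(F)\neq\emptyset$ if and only if a sums-of-squares formula of type $[r,s,n]$ exists over $F$.

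One implication is immediate: if $V(F)\neq\emptyset$, then since $F\subset K$ any $F$-point of $V$ is also a $K$-point, so $V(K)\neq\emptyset$. For the converse, I would argue by contrapositive. Assume $V(F)=\emptyset$. Since $F$ is algebraically closed, Hilbert's Nullstellensatz gives an identity $1=\sum_i g_i f_i$ with $g_i\in F[a_{ijk}]$. Reading this same identity inside the larger ring $K[a_{ijk}]$ shows that $1\in(f_1,\ldots,f_m)K[a_{ijk}]$, which forces $V(K)=\emptyset$.

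The only step that demands any care is the translation between the polynomial identity in $F[x,y]$ and the vanishing of the $f_i$'s in $F[a_{ijk}]$; this is really just bookkeeping, since two polynomials in $F[x,y]$ are equal if and only if their monomial coefficients agree, and the defining polynomials produced this way have coefficients in $\mathbb{Z}$, so the variety $V$ is insensitive to the choice of base field. I do not expect a serious obstacle: the statement is a prototypical instance of the Lefschetz principle, and once $V$ has been constructed, the Nullstellensatz yields the theorem in a few lines.
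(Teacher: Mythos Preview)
Your proposal is correct and follows essentially the same approach as the paper: the paper sets up exactly this reformulation in its Section~3 (your $f_i$ are the paper's generators of $I$, your $V$ is the paper's $X_{rsn}$), and then states the theorem as ``an immediate consequence of our reformulation and the Hilbert Nullstellensatz'' without further detail. Your contrapositive argument via $1=\sum g_i f_i$ is the natural way to unpack that one-line appeal.
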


This theorem means that, in determining existence of formulas over algebraically closed fields, it suffices to consider the fields $\bar{\mathbb{Q}}$ and $\bar{\mathbb{F}}_p$ for $p$ prime.

Next, we prove that for all but possibly finitely many $p$, existence in characteristic $0$ and $p$ are equivalent.

\begin{thm}
For algebraically closed fields, existence of sums-of-squares formulas of type $[r,s,n]$ over fields of characteristic $0$ and $p$ are equivalent for all but finitely many $p$.
\end{thm}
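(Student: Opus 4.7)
The plan is to encode the existence of a sums-of-squares formula as the existence of an $F$-point on a single affine scheme defined over $\mathbb{Z}$, and then conclude by a standard spreading-out argument. Write each $z_i = \sum_{j,k} c_{ijk} x_j y_k$ with indeterminate coefficients $c_{ijk}$; substituting into the identity and comparing coefficients of each monomial $x_a x_b y_c y_d$ produces a finite list of polynomials $f_\alpha \in \mathbb{Z}[c_{ijk}]$ whose common vanishing locus is exactly the space of sums-of-squares formulas. Let $V_{r,s,n} = \mathrm{Spec}\,\mathbb{Z}[c_{ijk}]/(f_\alpha)$. By the previous theorem, existence over an algebraically closed field depends only on the characteristic, so it suffices to show that $V_{r,s,n}(\bar{\mathbb{Q}}) \neq \emptyset$ if and only if $V_{r,s,n}(\bar{\mathbb{F}}_p) \neq \emptyset$, for all but finitely many primes $p$.

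For the forward direction, a $\bar{\mathbb{Q}}$-point of $V_{r,s,n}$ has coordinates in some number field $K$, and hence in $\mathcal{O}_K[1/N]$ for some positive integer $N$. For any rational prime $p \nmid N$, picking a prime $\mathfrak{p}$ of $\mathcal{O}_K[1/N]$ above $p$ and reducing modulo $\mathfrak{p}$ yields an $\bar{\mathbb{F}}_p$-point of $V_{r,s,n}$, since the $f_\alpha$ have integer coefficients. For the converse, suppose $V_{r,s,n}(\bar{\mathbb{Q}}) = \emptyset$. Then by Zariski's form of the Nullstellensatz the $f_\alpha$ must generate the unit ideal in $\mathbb{Q}[c_{ijk}]$ (otherwise a maximal ideal containing them would have residue field embedding into $\bar{\mathbb{Q}}$, producing a point). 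Thus one can write $1 = \sum g_\alpha f_\alpha$ with $g_\alpha \in \mathbb{Q}[c_{ijk}]$. Clearing a common denominator $N \in \mathbb{Z}_{>0}$ gives $N = \sum h_\alpha f_\alpha$ in $\mathbb{Z}[c_{ijk}]$, so for any $p \nmid N$ we have $1 \in (f_\alpha) \subset \bar{\mathbb{F}}_p[c_{ijk}]$, and hence $V_{r,s,n}(\bar{\mathbb{F}}_p) = \emptyset$.

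The only real content here is the passage from the defining identity to the $\mathbb{Z}$-scheme $V_{r,s,n}$; once that is in hand, both implications are just spreading-out and present no serious obstacle. The harder question, taken up subsequently in the paper, is to make the finite exceptional set effective — the bound produced by this argument depends on both a choice of witness $(c_{ijk})$ in the first implication and a choice of B\'ezout coefficients $g_\alpha$ in the second, neither of which is canonical.
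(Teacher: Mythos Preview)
Your proof is correct and follows essentially the same route as the paper. The paper splits the theorem into two propositions proved separately: the converse direction is handled by observing that $A_{rsn}^{\mathbb{Q}}=0$ forces some nonzero integer $a$ to lie in the defining ideal over $\mathbb{Z}$ (your B\'ezout relation $N=\sum h_\alpha f_\alpha$), and the forward direction takes a $\bar{\mathbb{Q}}$-point, passes to the number field it generates, and reduces modulo primes of $\mathcal{O}_K$ away from a finite bad set (your $\mathcal{O}_K[1/N]$ formulation). The only cosmetic difference is that the paper phrases the first half in terms of the quotient ring $A_{rsn}^{\mathbb{Z}}\otimes\mathbb{Q}$ vanishing and the second in terms of denominators of minimal polynomials, while you phrase both as explicit spreading-out; the underlying arguments are identical.
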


As a corollary of this theorem, we show that existence of sums-of-squares formulas can always be detected over finite fields:

\begin{cor}
If a sums-of-squares formula of type $[r,s,n]$ exists over any field $F$, then a sums-of-squares formula of type $[r,s,n]$ exists over some finite field.
\end{cor}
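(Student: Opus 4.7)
My plan is to deduce the corollary by chaining together the two preceding theorems, followed by a routine descent-to-a-finite-subfield argument. The key elementary observation behind this last step is that a sums-of-squares formula is determined by finitely many bilinear coefficients, so any formula defined over an algebraically closed field $\bar{\mathbb{F}}_p$ automatically lives over some finite subfield $\mathbb{F}_{p^k}$.

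First I would reduce to the algebraically closed case: if a formula of type $[r,s,n]$ exists over a field $F$, then it also exists over its algebraic closure $\bar{F}$, simply by viewing the same identity over the larger field. Next I would split on $\mathrm{char}(F)$. If $\mathrm{char}(F)=p>0$, then $\bar{\mathbb{F}}_p \subseteq \bar{F}$ is an inclusion of algebraically closed fields, so Theorem 1 (applied in the direction from $K=\bar{F}$ down to $F=\bar{\mathbb{F}}_p$) yields a formula over $\bar{\mathbb{F}}_p$. If instead $\mathrm{char}(F)=0$, then $\bar{\mathbb{Q}} \subseteq \bar{F}$, so Theorem 1 gives a formula over $\bar{\mathbb{Q}}$, and Theorem 2 then transports this to a formula over $\bar{\mathbb{F}}_p$ for any prime $p$ outside the finite exceptional set.

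Finally, I would descend from $\bar{\mathbb{F}}_p$ to a finite field. A formula of type $[r,s,n]$ over $\bar{\mathbb{F}}_p$ is specified by the $rs n$ coefficients $c_{ijk} \in \bar{\mathbb{F}}_p$ appearing in the expressions $z_i = \sum_{j,k} c_{ijk}\, x_j y_k$. Each $c_{ijk}$ lies in some finite extension $\mathbb{F}_{p^{k_{ijk}}}$, and taking $k$ to be the least common multiple of the $k_{ijk}$ exhibits the formula as defined over the finite field $\mathbb{F}_{p^k}$, as desired.

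I do not expect a real obstacle here: Theorems 1 and 2 already do all of the substantive work, and the corollary is essentially a repackaging of their conclusions together with the observation that finitely many algebraic elements over $\mathbb{F}_p$ generate a finite extension. The only mild care needed is to set up the two characteristic cases so that Theorem 1 is always applied to an inclusion of algebraically closed fields, which is why I pass through $\bar{F}$ rather than trying to use $F$ directly.
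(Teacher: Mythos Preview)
Your proposal is correct and follows essentially the same route as the paper's proof: split on the characteristic of $F$, use the Nullstellensatz-based theorem to pass down to $\bar{\mathbb{F}}_p$ (directly in positive characteristic, via $\bar{\mathbb{Q}}$ and the characteristic-transfer theorem in characteristic zero), and then observe that the finitely many coefficients of the formula generate a finite extension of $\mathbb{F}_p$. Your version is slightly more explicit about the inclusions $\bar{\mathbb{F}}_p\subseteq\bar{F}$ and $\bar{\mathbb{Q}}\subseteq\bar{F}$ and about taking an $\mathrm{lcm}$ of degrees, but there is no substantive difference.
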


We then improve upon this result by providing an explicit degree bound on this finite field.

\begin{thm}
Fix $[r,s,n]$ and prime $p$. If there is a sums-of-squares formula of type $[r,s,n]$ over $\bar{\mathbb{F}}_p$, then there is a sums-of-squares formula of type $[r,s,n]$ over $\mathbb{F}_{p^k}$ for some $k\leq 2\cdot 17^{rsn+r^2s^2}$.
\end{thm}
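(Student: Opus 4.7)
The plan is to leverage the variety of sums-of-squares formulas $V_{r,s,n}$ defined earlier in the paper. It sits in $\mathbb{A}^{rsn}$ with coordinates the $rsn$ coefficients $a_{ijk}$ of the bilinear forms $z_i=\sum_{j,k}a_{ijk}x_jy_k$, and is cut out over $\mathbb{Z}$ by the $\binom{r+1}{2}\binom{s+1}{2}$ quadratic equations obtained by matching coefficients in the defining identity. A formula over $\mathbb{F}_{p^k}$ is exactly an $\mathbb{F}_{p^k}$-point of $V_{r,s,n}$, equivalently a closed point of $V_{r,s,n}\otimes\mathbb{F}_p$ whose residue field has degree dividing $k$. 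Since the hypothesis gives $V_{r,s,n}(\bar{\mathbb{F}}_p)\neq\emptyset$, the theorem reduces to producing a closed point of $V_{r,s,n}\otimes\mathbb{F}_p$ of residue degree at most $2\cdot 17^{rsn+r^2s^2}$.

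To do this, I would pick a geometric irreducible component $W$ of $V_{r,s,n}\otimes\bar{\mathbb{F}}_p$ through the given point and cut $W$ down to dimension zero by intersecting with $\dim W$ hyperplanes defined over a small extension of $\mathbb{F}_p$. Taking the union over the Galois orbit yields a zero-dimensional subscheme of $V_{r,s,n}\otimes\mathbb{F}_p$, each of whose closed points gives a formula over a small extension of $\mathbb{F}_p$. The residue degrees of these points are controlled by three quantities: (i) the field of definition of $W$, (ii) the degree of $W$ in $\mathbb{A}^{rsn}$, to which an effective Bezout/Heintz-type bound applies since $V_{r,s,n}$ is cut out by quadratics in $rsn$ variables, and (iii) the smallest extension of $\mathbb{F}_p$ over which hyperplanes meeting $W$ properly can be found. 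Each admits an explicit estimate in terms of the ambient dimension $rsn$ and the number $\binom{r+1}{2}\binom{s+1}{2}$ of quadratic defining equations.

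The main obstacle is combining these three effective estimates to land on the clean bound $2\cdot 17^{rsn+r^2s^2}$. The appearance of the exponent $rsn+r^2s^2$ strongly suggests that the argument pays one factor for each ambient coordinate and one factor for each defining equation, so that a per-slot bound of base $17$ (together with an overall factor of $2$ coming from, e.g., the extension needed to pick transverse hyperplanes) is propagated through the slicing procedure. Tracking all the constants so that the product collapses to this crisp form, rather than to a larger and more opaque expression, is the bookkeeping step I expect to be the most technically demanding part of the proof.
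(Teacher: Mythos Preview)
Your slicing-and-Bezout outline is plausible in principle, but it is not how the paper obtains the bound, and it will not naturally produce the constant $17$. The paper's argument is a direct application of Bombieri's bound on the total degree of the zeta function: for an affine scheme in $\mathbb{A}^{N}$ over $\mathbb{F}_p$ cut out by $m$ polynomials of degree at most $d$, writing $Z(X,T)=R_1/R_2$ one has $\deg R_1+\deg R_2<(4d+9)^{N+m}$. Since the values $\#X(\mathbb{F}_{p^k})$ for $k\le D_1+D_2$ already determine $Z(X,T)$, if they all vanish then $Z(X,T)=1$ and $X(\bar{\mathbb{F}}_p)=\emptyset$. Here $d=2$, so $4d+9=17$; the ambient dimension is $N=rsn$; and the number of defining quadratics is $\binom{r+1}{2}\binom{s+1}{2}\le r^2s^2$. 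Taking $D_1=D_2=17^{rsn+r^2s^2}$ gives the contrapositive: a point over $\bar{\mathbb{F}}_p$ forces a point over some $\mathbb{F}_{p^k}$ with $k\le D_1+D_2=2\cdot 17^{rsn+r^2s^2}$. That is the entire proof.

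By contrast, your approach requires separate effective control on (i) the field of definition of a geometric component, (ii) its degree, and (iii) the extension over which transverse hyperplanes exist. Each of these can be bounded, but the natural output of Bezout applied to quadrics is a power of $2$ in $rsn$ alone, not a power of $17$ with exponent $rsn+r^2s^2$; the numerology you are trying to reverse-engineer is an artifact of Bombieri's theorem (the $4d+9$ and the exponent $N+m$), not of hyperplane sections. So the bookkeeping you flag as the hard part is not merely demanding---it is aimed at the wrong target.
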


This implies that existence of a sums-of-squares formula over an algebraically closed field of finite characteristic is computable, since determining existence involves checking finitely many finite fields.

Finally, we consider how large $p$ must be to guarantee the characteristic $0$ and characteristic $p$ cases are equivalent. We provide an explicit bound for this:

\begin{thm}
Fix $r,s,n$. Consider a prime $p$ with
\[\label{*}
p>\left(2^{3\cdot 2^q-2}\right)^{\left(\left(5\cdot 2^q-3\right)^{m+1}-1\right)/\left(5\cdot 2^q-4\right)}
\]
where $q=\left(\begin{array}{c}
rsn+4^{2^{n-1}+1}\\
4^{2^{n-1}+1}\end{array}\right)$ and $m=\left(\begin{array}{c}
rsn+2\cdot 4^{2^{n-1}}\\
2\cdot 4^{2^{n-1}}
\end{array}\right)$.

Then a sums-of-squares formula of type $[r,s,n]$ exists over an algebraically closed field of characteristic $0$ if and only if a sums-of-squares formula of type $[r,s,n]$ exists over an algebraically closed field of characteristic $p$.
\end{thm}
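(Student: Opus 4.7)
The plan is to promote Theorem 1.2 to an effective statement by combining an effective Nullstellensatz on certificates with arithmetic bounds on their coefficients. From the geometric reformulation used above, existence of a sums-of-squares formula of type $[r,s,n]$ over an algebraically closed field $F$ is equivalent to non-emptiness of the variety $V\subset\mathbb{A}^{rsn}$ cut out by a collection $f_1,\dots,f_M\in\mathbb{Z}[c_{ijk}]$ of degree-two polynomials (with $M$ on the order of $r^2s^2$). By Hilbert's Nullstellensatz, $V(F)=\emptyset$ if and only if $1\in(f_1,\dots,f_M)\cdot F[c]$. The strategy is to control, explicitly in $p$, when this condition coincides for $F=\bar{\mathbb{Q}}$ and $F=\bar{\mathbb{F}}_p$.

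The first step is to apply an effective Nullstellensatz (for example Koll\'ar's bound, or the arithmetic refinement due to Krick--Pardo--Sombra) to obtain an explicit degree bound $D$ on certificate polynomials $h_i$ in a representation $1=\sum h_i f_i$ over any algebraically closed field. For degree-two polynomials in $rsn$ variables the standard bounds are doubly-exponential; this accounts for the appearance of $4^{2^{n-1}+1}$ and $2\cdot 4^{2^{n-1}}$ as the degree bounds inside the binomial coefficients $q$ and $m$ in the statement, which count monomials in $rsn$ variables of the respective degrees.

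The second step fixes the degree of the $h_i$ and recasts the question $1\in(f_1,\dots,f_M)\cdot F[c]$ as solvability of an explicit $\mathbb{Z}$-linear system $Ax=b$ whose dimensions are controlled by $q$ and $m$. Solvability over an algebraically closed $F$ is equivalent to $\mathrm{rank}_F A=\mathrm{rank}_F[A|b]$. The third step is the transfer between characteristics. If $V(\bar{\mathbb{Q}})=\emptyset$, Cramer's rule yields an identity $N=\sum\tilde h_i f_i$ with $\tilde h_i\in\mathbb{Z}[c]$ and $|N|$ controlled by a maximal minor of $A$, bounded via Hadamard's inequality; for any $p\nmid N$, reduction mod $p$ gives $V(\bar{\mathbb{F}}_p)=\emptyset$. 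Conversely, if $V(\bar{\mathbb{Q}})\neq\emptyset$, some rank-witnessing minor of $[A|b]$ is nonzero in $\mathbb{Z}$ while all corresponding minors of $A$ vanish, and for $p$ exceeding its absolute value the rank relation is preserved mod $p$, so $V(\bar{\mathbb{F}}_p)\neq\emptyset$ as well. The tower $\bigl(2^{3\cdot 2^q-2}\bigr)^{((5\cdot 2^q-3)^{m+1}-1)/(5\cdot 2^q-4)}$ should arise as an explicit upper bound on the absolute value of these minors, with the geometric-series exponent reflecting the recursive structure of elimination on an integer matrix of dimensions controlled by $q$ and $m$.

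The principal obstacle is the precise bookkeeping that converts the doubly-exponential degree bound of the first step into the tower-type height bound quoted in the statement: the constants appearing in the effective Nullstellensatz literature must be tracked through the dimensional estimates $q$ and $m$, and the geometric-series exponent must come exactly from the contribution of each level of a recursive elimination. Aligning a specific effective theorem with the precise form of the quoted bound, and verifying that no intermediate estimate overshoots, is the delicate combinatorial part of the argument; once these bounds are in place, the characteristic-$0$-to-characteristic-$p$ transfer reduces to a direct reduction argument on Nullstellensatz certificates.
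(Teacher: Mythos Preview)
Your overall plan---effective Nullstellensatz to bound certificate degrees, then linear algebra over $\mathbb{Z}$ with Hadamard-type estimates on minors to control the primes at which solvability can change---is a legitimate route to \emph{an} effective version of Theorem~\ref{Ch4Thm}, but it is not the route the paper takes, and it will not land you on the bound actually stated.

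The paper does not invoke an effective Nullstellensatz at all. Instead it runs Buchberger's Algorithm over $\mathbb{Q}$ and tracks the height function $P(a/b)=\max\{|a|,|b|\}$ through every step. Dub\'e's theorem bounds the degree of elements of the Gr\"obner basis by $2\cdot 4^{2^{n-1}}$; this is the source of the $4^{2^{n-1}}$ terms in $q$ and $m$, not Koll\'ar-type Nullstellensatz bounds. The integer $q$ counts monomials of degree at most $2\cdot(2\cdot 4^{2^{n-1}})$ and bounds the number of iterations of the Division Algorithm inside a single $S$-pair reduction; the integer $m$ counts monomials of degree at most $2\cdot 4^{2^{n-1}}$ and bounds the number of outer Buchberger steps. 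The peculiar constants $3\cdot 2^q-2$ and $5\cdot 2^q-3$ come from solving the recursion $N\mapsto 4M^3N^2$ (one division step) with initial value $2M^2$, which gives $2^{3\cdot 2^q-2}M^{5\cdot 2^q-3}$ after $q$ steps; the geometric-series exponent $((5\cdot 2^q-3)^{m+1}-1)/(5\cdot 2^q-4)$ then comes from iterating $a_{k}\mapsto 2^{3\cdot 2^q-2}a_{k}^{\,5\cdot 2^q-3}$ through $m$ Buchberger steps. Once this height bound is in hand, any prime $p$ exceeding it leaves every coefficient of the Gr\"obner basis nonzero mod $p$, so ``$1$ lies in the ideal'' is preserved and reflected.

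Your linear-algebra approach would instead produce a bound of the shape (matrix dimension)$^{\text{matrix dimension}}$ times an entry bound, via Hadamard. That is a perfectly good explicit bound, arguably cleaner, but it will not have the base $2^{3\cdot 2^q-2}$ or the exponent $(5\cdot 2^q-3)^{m+1}$: those are fingerprints of the Buchberger recursion, and you correctly flag that you do not see how to recover them. So as a proof of the theorem \emph{as stated}, your proposal has a genuine gap; as an alternative proof of an effective equivalence with a different explicit constant, it is sound.
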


Combining this with the previous result, we have shown that existence of a sums-of-squares formula over an algebraically closed field is theoretically computable.

\begin{thm}
Existence of a sums-of-squares formula of type $[r,s,n]$ over any algebraically closed field is computable.
\end{thm}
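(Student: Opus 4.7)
The plan is to combine the earlier theorems to reduce the existence question, in each characteristic, to a finite brute-force search over bilinear coefficients in a finite field. Since existence over an algebraically closed field depends only on the characteristic, it suffices to exhibit a procedure that, given $r,s,n$ together with either $0$ or a prime $p$, returns whether a sums-of-squares formula of type $[r,s,n]$ exists.

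For the characteristic-$p$ case, I first observe that a sums-of-squares formula over a finite field $\mathbb{F}_{p^k}$ is specified by $rsn$ bilinear coefficients (one for each monomial $x_i y_j$ appearing in each $z_\ell$), yielding $(p^k)^{rsn}$ candidate formulas; checking whether a given candidate satisfies the required polynomial identity reduces to a finite polynomial-multiplication test in $\mathbb{F}_{p^k}[x_1,\dots,x_r,y_1,\dots,y_s]$. By the explicit degree-bound theorem above, existence over $\bar{\mathbb{F}}_p$ is equivalent to existence over $\mathbb{F}_{p^k}$ for some $k \leq 2\cdot 17^{rsn+r^2s^2}$, so iterating the brute-force check over each such $\mathbb{F}_{p^k}$ decides existence in characteristic $p$.

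For the characteristic-$0$ case, I compute the explicit bound $B$ on $p$ provided by the preceding theorem as a function of $r,s,n$, locate any prime $p>B$ (for instance by trial division on integers just past $B$, which terminates by Bertrand's postulate), and then run the characteristic-$p$ procedure at this $p$. By the choice of $B$, existence in characteristic $0$ is equivalent to existence over $\bar{\mathbb{F}}_p$, so this returns the correct answer.

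The principal mathematical work has already been done in the earlier theorems; the only remaining task is to observe that each reduction step is effective, so there is no real obstacle. The resulting algorithm is astronomically inefficient, since both $B$ and the bound $2\cdot 17^{rsn+r^2s^2}$ are iterated exponentials in $r,s,n$, but it is a decision procedure in the formal sense and establishes the claim.
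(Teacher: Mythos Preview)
Your proposal is correct and follows essentially the same approach as the paper: reduce characteristic $p$ to a finite search via the degree bound $2\cdot 17^{rsn+r^2s^2}$, and reduce characteristic $0$ to characteristic $p$ for a prime exceeding the explicit bound from the Gr\"obner-basis analysis. You supply a bit more detail on effectiveness (the brute-force count of candidates, and locating a prime past $B$ via Bertrand's postulate), but the structure of the argument is the same.
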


Unfortunately, our bounds are too large for this to be practical to compute.

\section{Summary of Methods}

We begin by reformulating the question of existence of sums-of-squares formulas as one in algebraic geometry. This is done by observing that a sums-of-squares formula is given by the coefficients of $x_jy_k$ in each $z_i$, which must satisfy certain polynomial equations. Thus, giving a sums-of-squares formula is equivalent to giving a solution to this set of polynomial equations, and a sums-of-squares formula exists over an algebraically closed field if and only if the corresponding variety is non-empty.

Denote by $A_{rsn}^F$ the coordinate ring corresponding to this set of polynomials. Let $X_{rsn}^F=\textrm{Spec }A_{rsn}^F$ be the corresponding variety, which we call the \emph{variety of sums-of-squares formulas of type $[r,s,n]$ over $F$}. Note that $A_{rsn}^F$ and $X_{rsn}^F$ are defined over the integers.

As a first step towards the main theorem we observe that if $F\subset K$ are algebraically closed fields, there is a sums-of-squares formula of type $[r,s,n]$ over $F$ if and only if there is one over $K$. This is a consequence of the Hilbert Nullstellensatz.

Using a reduction argument, we also show that if there is no sums-of-squares formula of type $[r,s,n]$ over $\bar{\mathbb{Q}}$, then there is no sums-of-squares formula of type  $[r,s,n]$ over $\bar{\mathbb{F}}_p$ for all but finitely $p$.

Working in the other direction, we take $\alpha_{ijk}$ to be the coefficients of a sums-of-squares formula over $\bar{\mathbb{Q}}$. We consider the coefficients of the minimal polynomials of the $\alpha_{ijk}$ to show that if there is a sums-of-squares formula of type $[r,s,n]$ exists over $\bar{\mathbb{Q}}$, then a sums-of-squares formula of the same type exists over $\bar{\mathbb{F}}_p$ for all but finitely many primes $p$.

Thus, we have that, for algebraically closed fields, existence of a sums-of-squares formula over fields of characteristic $0$ and characteristic $p$ are equivalent for all but finitely many $p$.

As a corollary to this theorem, we show that if a sums-of-squares formula exists over any field, then a sums-of-squares formula exists over some finite field $\mathbb{F}_{p^m}$. This follows from the main theorem by noting that a formula consists of only finitely many coefficients. In the second section, we improve on this result by providing an upper bound for how large such a finite field must be. This is accomplished by considering the zeta function and applying a theorem due to Bombieri \cite{Bombieri}.

Next, we consider how large $p$ must be to ensure equivalence between the characteristic $0$ and characteristic $p$ cases.

We have reduced the question of existence of sums-of-squares formulas to the question of whether or not a certain ideal is proper. This can be decided using Gr{\"o}bner bases, although not efficiently enough to be useful in this case.

However, Gr{\"o}bner bases can be computing using Buchberger's algorithm, and by analyzing the coefficients which can appear at each step in Buchberger's algorithm, we are able to prove equivalence between the characteristic $0$ and $p$ cases for ``large enough'' $p$.

By analyzing coefficients, we are able to give an explicit bound for $p$ for which this is true.

Finally, we combine this with the previous result providing a degree bound to show that existence of a sums-of-squares formula of type $[r,s,n]$ over an algebraically closed field is theoretically computable.

\section{Reformulation}

We would like to consider the question of when sums-of-squares formulas exist, i.e. when, for a fixed field $F$ with $\textrm{char}(F)\neq 2$, we have
\[
	(x_1 ^2 + \cdots + x_r ^2)(y_1 ^2 + \cdots + y_s ^2) 
			= z_1 ^2 + \cdots + z_n ^2
\]
where the $z_i$ are bilinear expressions in the $x$'s and $y$'s with coefficients in $F$.

We reformulate this question as one about an ideal in a polynomial ring.

We can write
\[
	z_i = \sum_{j,\; k}\alpha_{ijk}x_jy_k
\]
Then, expanding the left side of the formula, we have:
\[
	\sum_{j,\; k}x_j^2y_k^2
\]
Expanding the right side of the formula, we get:
\[
\begin{array}{rcl}
	\sum_{i,j,k} (\alpha_{ijk}^2x_j^2y_k^2) & + & \sum_{i,j<j',k}(2\alpha_{ijk}\alpha_{ij'k}x_jx_{j'}y_k^2)\\
	& + & \sum_{i,j,k<k'} (2\alpha_{ijk}\alpha_{ijk'}x_j^2y_ky_{k'})\\
	& + & \sum_{i,j<j',k<k'} (2(\alpha_{ijk}\alpha_{ij'k'}+\alpha_{ijk'}\alpha_{ij'k})x_jx_{j'}y_ky_{k'})
\end{array}
\]
Grouping terms, we see that the coefficient of $x_j^2 y_k^2$ on the right side is:
\[
	\sum_i\alpha_{ijk}^2
\]
the coefficient of $x_jx_{j'}y_k^2$ where $j<j'$ is:
\[
	\sum_i 2\alpha_{ijk}\alpha_{ij'k}
\]
the coefficient of $x_j^2y_ky_{k'}$ where $k<k'$ is:
\[
	\sum_i 2\alpha_{ijk}\alpha_{ijk'}
\]
and the coefficient of $x_jx_{j'}y_ky_{k'}$ where $j<j'$ and $k<k'$ is:
\[
	\sum_i 2(\alpha_{ijk}\alpha_{ij'k'}+\alpha_{ijk'}\alpha_{ij'k})
\]

Comparing these expressions with the coefficients on the left side of the formula, we see that existence of a sums-of-squares formula over $F$ is equivalent to the existence of $\alpha_{ijk}\in F$ satisfying
\[
\left\{\begin{array}{ll}
	\sum_i\alpha_{ijk}^2 = 1 & \textrm{for all } j,k\\
	\sum_i \alpha_{ijk}\alpha_{ij'k} = 0 & \textrm{for } j < j' \textrm{ and all }k\\
	\sum_i \alpha_{ijk}\alpha_{ijk'} = 0 & \textrm{for all } j \textrm{ and } k < k'\\
	\sum_i (\alpha_{ijk}\alpha_{ij'k'}+\alpha_{ijk'}\alpha_{ij'k}) = 0 & \textrm{for } j<j' \textrm{ and } k<k'\\
\end{array}\right\}
\]
Note that we can drop the coefficient 2, since $\textrm{char}(F)\neq 2$.\\
Changing notation, we are asking if the set of polynomials
\[
\left\{\begin{array}{ll}
	\sum_i (x_{ijk}^2) -1 & \textrm{for all } j,k\\
	\sum_i x_{ijk}x_{ij'k} & \textrm{for } j < j' \textrm{ and all }k\\
	\sum_i x_{ijk}x_{ijk'} & \textrm{for all } j \textrm{ and } k < k'\\
	\sum_i (x_{ijk}x_{ij'k'}+x_{ijk'}x_{ij'k}) & \textrm{for } j<j' \textrm{ and } k<k'\\
\end{array}\right\}
\]
has a zero.\\
Let $I$ be the ideal generated by this set of polynomials in $F[\{x_{ijk}\}]$.
The set of polynomials has a zero in an algebraic closure of $F$ if and only if $I$ is a proper ideal.
Thus, we have the following reformulation for the question of existence of sums-of-squares formulas:

\begin{prop}
Let $I$ be the ideal generated by the following set of polynomials in $S=F[\{x_{ijk}\}]$ (where $1\leq i \leq n$, $1\leq j\leq r$, $1\leq k\leq s$):
\[
\left\{\begin{array}{ll}
	\sum_i (x_{ijk}^2) -1 & \textrm{for all } j,k\\
	\sum_i x_{ijk}x_{ij'k} & \textrm{for } j < j' \textrm{ and all }k\\
	\sum_i x_{ijk}x_{ijk'} & \textrm{for all } j \textrm{ and } k < k'\\
	\sum_i (x_{ijk}x_{ij'k'}+x_{ijk'}x_{ij'k}) & \textrm{for } j<j' \textrm{ and } k<k'\\
\end{array}\right\}
\]
A sums-of-squares formula of type $[r,s,n]$ exists over an algebraic closure of $F$ if and only if $I\subset S$ is a proper ideal.
\end{prop}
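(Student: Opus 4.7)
The plan is to combine the coefficient-matching computation already carried out just above the proposition with Hilbert's weak Nullstellensatz. Let $\bar F$ denote a fixed algebraic closure of $F$.

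First, I would observe that the derivation preceding the proposition — expanding both sides of the putative identity, grouping the monomials $x_j^2 y_k^2$, $x_j x_{j'} y_k^2$, $x_j^2 y_k y_{k'}$, $x_j x_{j'} y_k y_{k'}$, and equating coefficients — applies verbatim with $F$ replaced by $\bar F$, since the only arithmetic hypothesis used is $\mathrm{char}\neq 2$. This shows that a sums-of-squares formula of type $[r,s,n]$ exists over $\bar F$ if and only if the displayed polynomial system has a simultaneous solution $(\alpha_{ijk})\in\bar F^{rsn}$; equivalently, the affine variety $V(I)\subset\bar F^{rsn}$ cut out by the generators of $I$ is non-empty.

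Second, I would apply the weak Nullstellensatz to the extended ideal $\bar I := I\cdot \bar F[\{x_{ijk}\}]$: it is proper in $\bar F[\{x_{ijk}\}]$ iff $V(\bar I)=V(I)\neq\emptyset$ in $\bar F^{rsn}$. What remains is the descent from $\bar F[\{x_{ijk}\}]$ down to $S=F[\{x_{ijk}\}]$, i.e.\ the claim that $1\in I \iff 1\in\bar I$. This is elementary: fix a basis $\{e_\lambda\}$ of $\bar F$ over $F$ with $e_0=1$; given $1 = \sum_i f_i g_i$ with $f_i\in\bar F[\{x_{ijk}\}]$ and $g_i$ a generator of $I$ (which has $F$-coefficients — in fact $\mathbb Z$-coefficients), expand each $f_i = \sum_\lambda e_\lambda f_{i,\lambda}$ with $f_{i,\lambda}\in S$, and project onto the $e_0$-component to obtain $1 = \sum_i f_{i,0}\, g_i \in I$. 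Chaining the equivalences gives: $I$ is proper in $S$ iff $\bar I$ is proper iff $V(I)\neq\emptyset$ iff a formula exists over $\bar F$.

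There is no real obstacle here. The substantive content is the coefficient computation, which has already been performed; the remainder is a standard Nullstellensatz plus a one-line linear-algebra descent that uses only the fact that the generators are defined over $F$.
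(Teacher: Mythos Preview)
Your proposal is correct and matches the paper's approach exactly: the paper also combines the preceding coefficient-matching computation with the weak Nullstellensatz, stating just before the proposition that ``the set of polynomials has a zero in an algebraic closure of $F$ if and only if $I$ is a proper ideal.'' The only difference is that you spell out the descent step $1\in \bar I \Rightarrow 1\in I$ via an $F$-basis of $\bar F$, whereas the paper treats this as a standard fact and leaves it implicit.
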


We set the notation $A_{rsn}^F=S/I$, where $S$ and $I$ are as above. Let $X_{rsn}^F$ denote the variety of sums-of-squares formulas of type $[r,s,n]$ over $F$. We define $A_{rsn}^\mathbb{Z}$ and $X_{rsn}^\mathbb{Z}$ similarly.

In particular, in the algebraically closed case, this reformulation can be stated as follows:

\begin{prop}
Let $F$ be an algebraically closed field. Then a sums-of-squares formula of type $[r,s,n]$ exists over $F$ if and only if $A_{rsn}^F\neq 0$.
\end{prop}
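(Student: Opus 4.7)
The plan is to derive this proposition as an immediate consequence of the preceding proposition, together with the trivial observation that a quotient ring $S/I$ is nonzero precisely when $I$ is a proper ideal of $S$. Since the hard work (the Nullstellensatz-style equivalence between existence of a common zero and properness of the ideal) has already been packaged into the previous proposition, there is essentially no new content to establish.

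First I would apply the previous proposition in the special case where the field in question is $F$ itself. Because $F$ is algebraically closed by hypothesis, $F$ coincides with its own algebraic closure, and the previous proposition therefore asserts that a sums-of-squares formula of type $[r,s,n]$ exists over $F$ if and only if the ideal $I \subset S = F[\{x_{ijk}\}]$ generated by the explicit list of polynomials is a proper ideal of $S$.

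Next I would invoke the basic ring-theoretic fact that, by the definition of the quotient ring, $I$ is a proper ideal of $S$ if and only if $S/I \neq 0$. Since $A_{rsn}^F$ is defined to be $S/I$, this reads $I \subsetneq S \iff A_{rsn}^F \neq 0$. Chaining this with the equivalence from the previous step yields exactly the statement of the proposition.

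The only potential subtlety, and therefore the main (minor) obstacle, is ensuring that the proof clearly identifies $F$ with its own algebraic closure when invoking the prior proposition, so that the statement ``exists over an algebraic closure of $F$'' immediately specializes to ``exists over $F$''; once that identification is made explicit, the rest is bookkeeping.
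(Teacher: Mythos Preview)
Your proposal is correct and matches the paper's approach exactly: the paper gives no separate proof for this proposition, treating it as an immediate restatement of the preceding proposition via the definition $A_{rsn}^F = S/I$ and the trivial equivalence $S/I \neq 0 \iff I \subsetneq S$. Your only addition is making explicit that an algebraically closed $F$ is its own algebraic closure, which is precisely the (obvious) point the paper leaves implicit.
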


By the Hilbert Nullstellensatz, this is also equivalent to $X_{rsn}^F$ being a nonempty scheme over $F$.

\section{Preliminary Results}

We begin with an immediate consequence of our reformulation and the Hilbert Nullstellensatz, which allows us to consider only $\bar{\mathbb{Q}}$ and $\bar{\mathbb{F}}_p$ for $p$ prime.

\begin{thm}
If $F\subset K$ are algebraically closed fields, there is a sums-of-squares formula of type $[r,s,n]$ over $F$ if and only if there is one over $K$.

In particular, in determining existence of formulas over algebraically closed fields, it suffices to consider the fields $\bar{\mathbb{Q}}$ and $\bar{\mathbb{F}}_p$ for $p$ prime.
\end{thm}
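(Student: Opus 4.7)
The plan is to leverage the reformulation proved in the previous proposition: existence of a sums-of-squares formula of type $[r,s,n]$ over an algebraically closed field $F$ is equivalent to the quotient ring $A_{rsn}^F \neq 0$, equivalently to the generating ideal $I_F \subset F[\{x_{ijk}\}]$ being proper. Since the defining polynomials have integer coefficients, their generating ideal is defined uniformly across any base, so this criterion is what we compare as we change $F$.

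The forward direction is trivial: if a sums-of-squares formula of type $[r,s,n]$ exists over $F$ with coefficients $\alpha_{ijk} \in F$, then these same coefficients lie in $K \supset F$ and give a formula over $K$.

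For the reverse direction, I would argue by contraposition. Suppose there is no formula over $F$; then $I_F = F[\{x_{ijk}\}]$, so we may write
\[
1 = \sum_\ell g_\ell f_\ell
\]
for finitely many $g_\ell \in F[\{x_{ijk}\}]$ and $f_\ell$ among the generating polynomials listed in the proposition. Since $F \subset K$, this identity remains valid in $K[\{x_{ijk}\}]$, hence $1 \in I_K$, so no formula exists over $K$ either. Equivalently, one could observe that $A_{rsn}^K = A_{rsn}^F \otimes_F K$ (the generators being defined over $\mathbb{Z}$), and a field extension is faithfully flat, so $A_{rsn}^F \neq 0$ iff $A_{rsn}^K \neq 0$. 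There is no substantial obstacle here, as the argument is a direct application of the reformulation together with an elementary fact about ideals under extension of scalars.

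For the ``in particular'' statement: any algebraically closed field $K$ of characteristic $0$ contains (a copy of) the algebraic closure $\bar{\mathbb{Q}}$ of its prime subfield, and any algebraically closed field of characteristic $p$ contains $\bar{\mathbb{F}}_p$. Applying the equivalence just proven with $F = \bar{\mathbb{Q}}$ or $F = \bar{\mathbb{F}}_p$ then shows that existence over $K$ is equivalent to existence over the corresponding prime-subfield algebraic closure, reducing the problem to these countably many fields.
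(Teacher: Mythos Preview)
Your proof is correct and matches the paper's approach: the paper states the theorem as an immediate consequence of the reformulation (Proposition~3.2) and the Hilbert Nullstellensatz, without writing out a separate proof, and your argument is precisely the natural elaboration of that remark. Both the explicit ``$1 = \sum g_\ell f_\ell$'' argument and the faithfully-flat tensor observation are valid ways to make the step from $A_{rsn}^F \neq 0$ to $A_{rsn}^K \neq 0$ explicit.
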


Consequently,

\begin{cor}
A sums of squares formula exists over $\mathbb{C}$ if and only if it exists over $\bar{\mathbb{Q}}$.
\end{cor}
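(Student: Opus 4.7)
The plan is to observe that this is a direct specialization of the preceding theorem. Both $\bar{\mathbb{Q}}$ and $\mathbb{C}$ are algebraically closed fields, and the standard embedding of $\mathbb{Q}$ into $\mathbb{C}$ extends (uniquely up to automorphism) to an embedding $\bar{\mathbb{Q}} \hookrightarrow \mathbb{C}$, since every algebraic extension of $\mathbb{Q}$ embeds into any algebraically closed field containing $\mathbb{Q}$. Fixing such an embedding realizes $\bar{\mathbb{Q}}$ as an algebraically closed subfield of $\mathbb{C}$.

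Given this, the corollary follows by applying the preceding theorem with $F = \bar{\mathbb{Q}}$ and $K = \mathbb{C}$: the hypothesis ``$F \subset K$ are algebraically closed fields'' is satisfied, so existence of a sums-of-squares formula of type $[r,s,n]$ over $\bar{\mathbb{Q}}$ is equivalent to existence of one over $\mathbb{C}$.

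There is no substantive obstacle here; the content of the corollary lies entirely in the theorem, and the present statement merely singles out the most classically relevant instance. One could alternatively argue directly from the Nullstellensatz applied to the ideal $I \subset \mathbb{C}[\{x_{ijk}\}]$ defining $X_{rsn}^{\mathbb{C}}$: nonemptiness of $X_{rsn}^{\mathbb{C}}$ is equivalent to $I$ being proper, and since the generators have coefficients in $\mathbb{Q} \subset \bar{\mathbb{Q}}$, whether $I$ is proper is detected over $\bar{\mathbb{Q}}$. But invoking the earlier theorem is cleaner and makes the corollary a one-line consequence.
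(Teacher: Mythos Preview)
Your proposal is correct and matches the paper's approach exactly: the paper states this corollary immediately after the theorem with only the word ``Consequently,'' treating it as an obvious specialization to the inclusion $\bar{\mathbb{Q}}\subset\mathbb{C}$ of algebraically closed fields. Your added remarks about the embedding and the alternative Nullstellensatz argument are fine but unnecessary elaboration on what the paper leaves implicit.
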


We now make a reduction argument about what happens if there are no formulas of type $[r,s,n]$ over $\bar{\mathbb{Q}}$ (or, equivalently, any algebraically closed field of characteristic $0$):

\begin{prop} \label{Finp}
If there is no sums-of-squares formula of type $[r,s,n]$ over $\bar{\mathbb{Q}}$, then there is no sums-of-squares formula of type $[r,s,n]$ over $\bar{\mathbb{F}}_p$ for all but finitely many $p$.
\end{prop}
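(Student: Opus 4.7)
By the reformulation together with the preceding theorem, having no sums-of-squares formula of type $[r,s,n]$ over $\bar{\mathbb{Q}}$ means $A_{rsn}^{\bar{\mathbb{Q}}}=0$, and I want to conclude $A_{rsn}^{\bar{\mathbb{F}}_p}=0$ for all but finitely many $p$. The critical observation is that the defining polynomials of $I$ have integer coefficients, so $A_{rsn}^{F} = A_{rsn}^{\mathbb{Z}} \otimes_{\mathbb{Z}} F$ for every field $F$; this reduces everything to a single statement about the integral ring $A_{rsn}^{\mathbb{Z}}$.

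First I would pass from $\bar{\mathbb{Q}}$ down to $\mathbb{Q}$. Since $\bar{\mathbb{Q}}$ is faithfully flat over $\mathbb{Q}$, the vanishing $A_{rsn}^{\bar{\mathbb{Q}}} = A_{rsn}^{\mathbb{Q}} \otimes_{\mathbb{Q}} \bar{\mathbb{Q}} = 0$ forces $A_{rsn}^{\mathbb{Q}} = 0$. Equivalently, $1 = 0$ in $A_{rsn}^{\mathbb{Z}} \otimes_{\mathbb{Z}} \mathbb{Q}$, so there exists a nonzero integer $N$ with $N \cdot 1 = 0$ in $A_{rsn}^{\mathbb{Z}}$. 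Concretely, one may take $N$ to be a common denominator for the rational coefficients appearing in any expression $1 = \sum_\alpha f_\alpha g_\alpha$ with the $f_\alpha$ equal to the integer generators of $I$ and $g_\alpha \in \mathbb{Q}[\{x_{ijk}\}]$ (such an expression exists because $I\cdot \mathbb{Q}[\{x_{ijk}\}]$ is the unit ideal).

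Next I would specialize modulo primes. Fix any prime $p$ not dividing $N$. Then $N$ is a unit in $\mathbb{F}_p$, so reducing the relation $N \cdot 1 = 0$ modulo $p$ yields $1 = 0$ in $A_{rsn}^{\mathbb{F}_p} = A_{rsn}^{\mathbb{Z}} \otimes_{\mathbb{Z}} \mathbb{F}_p$. Thus $A_{rsn}^{\mathbb{F}_p} = 0$, and base-changing to the algebraic closure gives $A_{rsn}^{\bar{\mathbb{F}}_p} = A_{rsn}^{\mathbb{F}_p} \otimes_{\mathbb{F}_p} \bar{\mathbb{F}}_p = 0$. By the reformulation, no sums-of-squares formula of type $[r,s,n]$ exists over $\bar{\mathbb{F}}_p$. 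Since only the finitely many prime divisors of $N$ are excluded, this proves the proposition.

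The only real obstacle is the first step, extracting a single integer annihilator of $1$ from the vanishing of $A_{rsn}^{\bar{\mathbb{Q}}}$; this is the content of the faithful-flatness argument (or, equivalently, of clearing denominators in a Nullstellensatz certificate). Once $N$ is in hand, the conclusion follows by a routine base-change argument, and the finitely many exceptional primes are exactly the prime divisors of $N$.
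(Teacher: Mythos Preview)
Your argument is correct and essentially identical to the paper's: both extract a nonzero integer $N$ (the paper calls it $a$) annihilating $1$ in $A_{rsn}^{\mathbb{Z}}$ from the vanishing of $A_{rsn}^{\mathbb{Q}}$, then observe that for primes $p\nmid N$ the ring $A_{rsn}^{\mathbb{Z}}\otimes_{\mathbb{Z}}\mathbb{F}_p$ vanishes. The only cosmetic difference is that the paper invokes the reformulation directly to get $A_{rsn}^{\mathbb{Q}}=0$, whereas you route through $A_{rsn}^{\bar{\mathbb{Q}}}=0$ and faithful flatness before arriving at the same point.
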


\begin{proof}
Suppose there is no sums-of-squares formula of type $[r,s,n]$ over $\bar{\mathbb{Q}}$. Then
\[
A_{rsn}^\mathbb{Q}=A_{rsn}^\mathbb{Z}\otimes_{\mathbb{Z}}\mathbb{Q}=0
\]
so for all $f\in A_{rsn}^\mathbb{Z}$, we must have $af = 0$ for some $0\neq a\in \mathbb{Z}$. In particular, there is $a\in \mathbb{Z}$ such that $a\cdot 1 = 0$. Then $af=0$ for all $f$, so we can find one $a$ that works for all $f$.

Now, let $p$ be a prime not dividing $a$. Then $a$ is invertible in $\mathbb{Z}/p\mathbb{Z}$, so
\[
A_{rsn}^\mathbb{Z}\otimes_\mathbb{Z}\mathbb{Z}/p\mathbb{Z}=0
\]
Thus, for all primes $p$ not dividing $a$, there is no sums-of-squares formula of type $[r,s,n]$ over $\bar{\mathbb{F}}_p$.

There are only finitely many primes dividing $a$, so this completes the proof.
\end{proof}

Now, we will prove that the characteristic $0$ and $p$ cases coincide for all but finitely many $p$.

\begin{prop} 
If there is a sums-of-squares formula of type $[r,s,n]$ over any field of characteristic $0$, then there is a sums-of-squares formula of type $[r,s,n]$ over every algebraically closed field of characteristic $p$ for all but finitely many $p$.
\end{prop}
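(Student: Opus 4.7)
The plan is to invoke the earlier theorem on algebraically closed fields to reduce to the case of a formula over $\bar{\mathbb{Q}}$, then spread out the algebraic coefficients over $\mathbb{Z}$ and reduce modulo primes. Explicitly, if a sums-of-squares formula of type $[r,s,n]$ exists over some field $F$ of characteristic $0$, then its coefficients $\alpha_{ijk}\in F$ also give a formula over $\bar{F}$, and by the earlier theorem (existence over algebraically closed fields is independent of the algebraically closed field) we may replace $\bar{F}$ by $\bar{\mathbb{Q}}$. So assume we have $\alpha_{ijk}\in\bar{\mathbb{Q}}$ satisfying the defining equations of $X_{rsn}^{\mathbb{Z}}$.

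Next I would build a good model over $\mathbb{Z}$. Let $R=\mathbb{Z}[\{\alpha_{ijk}\}]\subset\bar{\mathbb{Q}}$; this is a finitely generated $\mathbb{Z}$-algebra contained in some number field $K=\mathbb{Q}(\{\alpha_{ijk}\})$. Each $\alpha_{ijk}$ satisfies a monic polynomial over $\mathbb{Q}$, whose coefficients have denominators dividing some integer $N$. Then every $\alpha_{ijk}$ is integral over $\mathbb{Z}[1/N]$, so $R[1/N]$ is an integral (hence finite) extension of $\mathbb{Z}[1/N]$, in particular a finitely generated torsion-free $\mathbb{Z}[1/N]$-module.

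Now fix any prime $p\nmid N$. Since $R[1/N]$ is a nonzero finitely generated torsion-free $\mathbb{Z}[1/N]$-module, $R[1/N]/pR[1/N]$ is a nonzero finite-dimensional $\mathbb{F}_p$-algebra, hence has a maximal ideal $\mathfrak{m}$ whose residue field is a finite extension of $\mathbb{F}_p$ and therefore embeds in $\bar{\mathbb{F}}_p$. The composite ring homomorphism
\[
R\;\longrightarrow\;R[1/N]\;\longrightarrow\;R[1/N]/\mathfrak{m}\;\hookrightarrow\;\bar{\mathbb{F}}_p
\]
sends $\alpha_{ijk}$ to elements $\bar{\alpha}_{ijk}\in\bar{\mathbb{F}}_p$, and ring homomorphisms preserve polynomial identities, so the $\bar{\alpha}_{ijk}$ satisfy the defining equations of $X_{rsn}^{\mathbb{F}_p}$, i.e.\ they give a sums-of-squares formula of type $[r,s,n]$ over $\bar{\mathbb{F}}_p$. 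This works for every $p$ not dividing the fixed integer $N$, proving the proposition.

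The substantive content is the reduction step, and I expect no real obstacle beyond book-keeping: the only subtle point is that one must know $R[1/N]\otimes_{\mathbb{Z}}\mathbb{F}_p\ne 0$, which is why producing an $N$ that makes $R[1/N]$ a \emph{finite} module over $\mathbb{Z}[1/N]$ (rather than merely a finitely generated algebra) is important. An equivalent, slightly more geometric phrasing, if preferred, is that $\mathrm{Spec}\,R\to\mathrm{Spec}\,\mathbb{Z}$ is dominant with generic fiber a point in $X_{rsn}^{\mathbb{Q}}$, so its image contains all but finitely many closed points of $\mathrm{Spec}\,\mathbb{Z}$, each of whose fibers then has a point over $\bar{\mathbb{F}}_p$.
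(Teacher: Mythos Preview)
Your proposal is correct and follows essentially the same approach as the paper: reduce to $\bar{\mathbb{Q}}$, take the integer $N$ clearing the denominators in the minimal polynomials of the $\alpha_{ijk}$, and reduce modulo any prime $p\nmid N$. The only cosmetic difference is that the paper phrases the reduction via the ring of integers $O_K$ and a prime $P$ lying over $p$, whereas you work directly with $R[1/N]=\mathbb{Z}[1/N][\{\alpha_{ijk}\}]$ and a maximal ideal of $R[1/N]/pR[1/N]$; your explicit check that $R[1/N]$ is a nonzero finite free $\mathbb{Z}[1/N]$-module (hence has nonzero reduction mod $p$) is, if anything, slightly more careful than the paper's wording.
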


Note that if a sums-of-squares formula exists over a field $F$ of characteristic $0$, then that same formula is valid over the algebraic closure $\bar{F}$. Then, by our previous theorem, there is a sums-of-squares formula over $\bar{\mathbb{Q}}$.

\begin{proof}
Suppose there is a sums-of-squares formula of type $[r,s,n]$ over $\bar{\mathbb{Q}}$ with coefficients $\alpha_{ijk}$. Let $K$ be the number field generated by the $\alpha_{ijk}$, and $O_K$ the ring of algebraic integers in $K$. Since there are finitely many $\alpha_{ijk}$, there are finitely many coefficients in their minimal polynomials. Let $S$ be the set of primes that are divisors of the denominators of the coefficients of these minimal polynomials. Then, for prime $p$ not in $S$, for every prime ideal $P$ in $O_K$ lying over $(p)$, the coefficients $\alpha_{ijk}$ are integral at $P$. Reducing modulo $P$, we obtain a sums-of-squares formula over the field $O_K/P$, which is a finite field extension of $\mathbb{F}_p$. Hence there is a sums-of-squares formula of type $[r,s,n]$ over $\bar{\mathbb{F}}_p$ for $p$ not in the finite set $S$.
\end{proof}

Putting these two propositions together, we have shown:

\begin{thm} \label{Ch4Thm}
For algebraically closed fields, existence of sums-of-squares formulas of type $[r,s,n]$ over fields of characteristic $0$ and $p$ are equivalent for all but finitely many $p$.
\end{thm}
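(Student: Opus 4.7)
The statement is essentially the combination of Proposition \ref{Finp} and the proposition preceding it, together with the earlier theorem reducing existence over arbitrary algebraically closed fields to existence over the minimal algebraically closed fields $\bar{\mathbb{Q}}$ and $\bar{\mathbb{F}}_p$. So my plan is simply to assemble these pieces carefully, tracking the exceptional set of primes.

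First I would use the earlier theorem (existence over an algebraically closed field $F$ is equivalent to existence over any algebraically closed subfield or extension) to reduce the two-sided statement to an equivalence between existence over $\bar{\mathbb{Q}}$ and existence over $\bar{\mathbb{F}}_p$. Specifically, any algebraically closed field of characteristic $0$ contains $\bar{\mathbb{Q}}$, so a formula exists over some (equivalently, every) algebraically closed field of characteristic $0$ if and only if it exists over $\bar{\mathbb{Q}}$; similarly, any algebraically closed field of characteristic $p$ contains $\bar{\mathbb{F}}_p$, so existence in characteristic $p$ reduces to existence over $\bar{\mathbb{F}}_p$.

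Next I would handle the two implications separately. For the forward direction (characteristic $0$ implies characteristic $p$), Proposition 4.3 gives a finite set $S_1$ of primes outside of which a formula over $\bar{\mathbb{Q}}$ produces one over $\bar{\mathbb{F}}_p$ by reducing the coefficients modulo a prime $P$ of $\mathcal{O}_K$ lying above $p$, where $K$ is the number field generated by the coefficients $\alpha_{ijk}$. For the reverse direction (characteristic $p$ implies characteristic $0$), I would argue contrapositively: assume there is no formula over $\bar{\mathbb{Q}}$, and apply Proposition \ref{Finp} to obtain a finite set $S_2$ of primes outside of which $A_{rsn}^{\mathbb{Z}} \otimes_{\mathbb{Z}} \mathbb{Z}/p\mathbb{Z} = 0$, so no formula exists over $\bar{\mathbb{F}}_p$ either.

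Finally I would conclude that for any prime $p \notin S_1 \cup S_2$, existence of a sums-of-squares formula of type $[r,s,n]$ over $\bar{\mathbb{Q}}$ is equivalent to existence over $\bar{\mathbb{F}}_p$, and hence (by the reduction in the first step) existence in characteristic $0$ is equivalent to existence in characteristic $p$. Since $S_1 \cup S_2$ is finite, the theorem is proved. There is no real obstacle here, since the two propositions have already done the work; the only thing to be careful about is not conflating the exceptional sets from the two directions, and noting that their union remains finite.
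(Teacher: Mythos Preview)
Your proposal is correct and follows exactly the paper's approach: the paper simply states ``Putting these two propositions together, we have shown'' the theorem, and you have spelled out that assembly in detail. One small cosmetic point: your sets $S_1$ and $S_2$ are defined under mutually exclusive hypotheses (a formula over $\bar{\mathbb{Q}}$ exists, or it does not), so rather than forming $S_1\cup S_2$ it is cleaner to do a case split---but either way the argument goes through.
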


Of course, we would like to be able to eliminate the condition ``for all but finitely many $p$.''

Considering the scheme $X_{rsn}^\mathbb{Z}$ over $\mbox{Spec }\mathbb{Z}$, we have shown that the generic fiber is nonempty if and only if ``almost all'' fibers are nonempty. If we could somehow show that this scheme is flat, then since it would have open image, if the fiber over a prime $p$ were nonempty, then all fibers would be nonempty. Showing in addition that this image is closed would establish the independence of existence of a sums-of-squares formula from the base field in the algebraically closed case. However, showing that this scheme is flat seems to be very difficult.

We also have the following corollary:

\begin{cor}
If a sums-of-squares formula of type $[r,s,n]$ exists over a field $F$, then a sums-of-squares formula of type $[r,s,n]$ exists over some finite field.
\end{cor}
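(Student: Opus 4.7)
The plan is to reduce to the case of a formula existing over $\bar{\mathbb{F}}_p$ for some prime $p$, and then observe that a formula uses only finitely many coefficients. First, if a formula exists over $F$, then the same identity is valid over the algebraic closure $\bar{F}$, so without loss of generality I may replace $F$ by $\bar{F}$ and assume $F$ is algebraically closed. Now split into two cases depending on $\textrm{char}(F)$.

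If $\textrm{char}(F) = 0$, then $\bar{\mathbb{Q}} \subset F$, and by the first theorem of the Preliminary Results a formula exists over $\bar{\mathbb{Q}}$. Applying Theorem \ref{Ch4Thm} (equivalence of characteristic $0$ and $p$ for all but finitely many $p$), a formula of type $[r,s,n]$ exists over $\bar{\mathbb{F}}_p$ for some prime $p$. If instead $\textrm{char}(F) = p > 0$, then $\bar{\mathbb{F}}_p \subset F$, and again by the first theorem of Preliminary Results a formula exists over $\bar{\mathbb{F}}_p$. In either case we have reduced to a formula over $\bar{\mathbb{F}}_p$.

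To finish, recall that a sums-of-squares formula of type $[r,s,n]$ consists of the finite list of coefficients $\alpha_{ijk} \in \bar{\mathbb{F}}_p$ with $1 \le i \le n$, $1 \le j \le r$, $1 \le k \le s$. Since $\bar{\mathbb{F}}_p = \bigcup_{k \geq 1}\mathbb{F}_{p^k}$ and each $\alpha_{ijk}$ is algebraic over $\mathbb{F}_p$, each coefficient lies in some $\mathbb{F}_{p^{d_{ijk}}}$. Setting $m$ to be the least common multiple of the $d_{ijk}$, all coefficients lie in $\mathbb{F}_{p^m}$, and the original polynomial identity then holds over $\mathbb{F}_{p^m}$, giving a sums-of-squares formula over this finite field.

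There is no real obstacle here: the statement is a direct combination of Theorem \ref{Ch4Thm} with the elementary observation that $\bar{\mathbb{F}}_p$ is a direct limit of finite fields. The only point requiring any care is making sure to treat both characteristics of $F$, which requires invoking the first theorem of the Preliminary Results once to descend from an arbitrary algebraically closed field down to either $\bar{\mathbb{Q}}$ or $\bar{\mathbb{F}}_p$.
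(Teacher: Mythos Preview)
Your proof is correct and follows essentially the same approach as the paper: reduce to a formula over some $\bar{\mathbb{F}}_p$ by splitting on the characteristic of $F$ (using the descent theorem for algebraically closed fields and, in characteristic zero, Theorem~\ref{Ch4Thm}), then observe that the finitely many coefficients $\alpha_{ijk}$ generate a finite extension of $\mathbb{F}_p$. The only cosmetic difference is that you take an explicit lcm of degrees where the paper just notes that $\mathbb{F}_p[\{\alpha_{ijk}\}]$ is a finite extension of $\mathbb{F}_p$.
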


\begin{proof}
If $F$ has characteristic $0$, then there is a sums-of-squares formula of type $[r,s,n]$ over $\bar{\mathbb{F}}_p$ for some $p$.

If $F$ has characteristic $q$, then there is a sums-of-squares formula of type $[r,s,n]$ over $\bar{F}$, hence over $\bar{\mathbb{F}}_q$.

In either case, we have a sums-of-squares formula of type $[r,s,n]$ over some $\bar{\mathbb{F}}_q$. This sums-of-squares formula is given by finitely many coefficients $\alpha_{ijk}$, so we have a sums-of-squares formula over $\mathbb{F}_q[\{\alpha_{ijk}\}]$. The $\alpha_{ijk}$ are algebraic over $\mathbb{F}_q$, so this is a finite extension of a finite field. Thus we have a sums-of-squares formula of type $[r,s,n]$ over a finite field.
\end{proof}

This corollary means that that sums-of-squares formulas can, in principle, be detected by checking over finite fields, and this can be done with a computer search. We do not know a priori how large of a field would be needed. The next section is dedicated to establishing a bound on the degree of this field. However, this bound is too large to be computationally feasible for conducting computer searches for formulas.

\section{Degree Bound}

In this section, we show that for fixed $[r,s,n]$ and prime $p$, there is $d$ such that if there is no sums-of-squares formula of type $[r,s,n]$ over $\mathbb{F}_{p^k}$ for $k<d$, then there is no sums-of-squares formula over $\bar{\mathbb{F}}_p$. Thus, we can theoretically check computationally if a sums-of-squares formula exists over $\bar{\mathbb{F}}$, since there are only finitely many choices for coefficients over $\mathbb{F}_{p^k}$. However, the value of $d$ is too large for this to be computationally feasible.

This result follows immediately from a theorem bounding the total degree of the zeta function due to Bombieri \cite{Bombieri}. We review the relevant material on zeta functions before giving our result. This material can be found in \cite{Wan}.

Let $\mathbb{F}_p$ be the field with $p$ elements, for $p$ prime. Let $X$ be an algebraic set over $\mathbb{F}_p$, we assume that $X$ is affine and defined by $m$ polynomials in $n$ variables $f_1,...,f_m\in\mathbb{F}_p[x_1,...,x_n]$, so 
\[
X(\mathbb{F}_p)=\{x\in\mathbb{F}_p^n\;|\;f_1(x)=\cdots=f_m(x)=0\}
\]
Let $d$ be the maximum total degree among the polynomials $f_i$.
 
Fix an algebraic closure $\bar{\mathbb{F}}_p$, and let $\mathbb{F}_{p^k}$ denote the unique subfield of $\bar{\mathbb{F}}_p$ with $p^k$ elements. $\#X(\mathbb{F}_{p^k})$ denotes the number of $\mathbb{F}_{p^k}$-rational points on $X$.

\begin{defn}
The \emph{zeta function} of $X$ is the generating function
\[
Z(X)=Z(X,T)=exp\left(\sum_{k=1}^\infty \frac{T^k}{k}\#X(\mathbb{F}_{p^k})\right)\in 1+T\mathbb{Z}[T]
\]
\end{defn}

As conjectured by Weil \cite{Weil} and proved by Dwork \cite{Dwork} and Grothendieck \cite{Grothendieck}, this is a rational function:

\begin{thm}
$Z(X)$ is a rational function, so it can be written
\[
Z(X,T)=\frac{R_1(X,T)}{R_2(X,T)}
\]
with $R_1,R_2\in\mathbb{Z}[T]$.

In fact, we can take $R_1,R_2\in 1+T\mathbb{Z}[T]$.
\end{thm}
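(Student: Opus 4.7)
The plan is to follow Dwork's original $p$-adic approach, which proves rationality directly from a trace formula for a completely continuous operator on a $p$-adic Banach space; the alternative via Grothendieck's $\ell$-adic cohomology and the Lefschetz trace formula works equally well but requires substantially more machinery. The preliminary observation is that $Z(X,T)\in 1+T\mathbb{Z}[[T]]$, immediate from the Euler product
\[
Z(X,T)=\prod_{x\text{ closed}}\bigl(1-T^{\deg(x)}\bigr)^{-1},
\]
or, equivalently, from $\#X(\mathbb{F}_{p^k})\in\mathbb{Z}_{\geq 0}$ together with the defining formula $Z=\exp(\sum_k T^k \#X(\mathbb{F}_{p^k})/k)$.

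First I would reduce to counting via an exponential sum. For $X$ cut out in $\mathbb{A}^n$ by $f_1,\ldots,f_m$, the standard orthogonality relation gives
\[
\#X(\mathbb{F}_{p^k})=\frac{1}{p^{km}}\sum_{x\in\mathbb{F}_{p^k}^n}\sum_{y\in\mathbb{F}_{p^k}^m}\psi_k\!\left(\sum_i y_i f_i(x)\right),
\]
where $\psi_k=\psi\circ\mathrm{Tr}_{\mathbb{F}_{p^k}/\mathbb{F}_p}$ for a fixed nontrivial additive character $\psi$ of $\mathbb{F}_p$. This reduces the problem to understanding sums of the form $\sum \psi_k(F(u))$ attached to a single polynomial $F$ over $\mathbb{F}_{p^k}^N$.

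The technical heart is Dwork's construction of a completely continuous operator $\alpha$ on a $p$-adic Banach space of overconvergent power series, built from a $p$-adic analytic lift of $\psi$ via Dwork's splitting function, so that the exponential sums above are expressible as an alternating sum $\sum_i (-1)^i \mathrm{Tr}(\alpha_i^k)$ for finitely many such operators $\alpha_i$. Exponentiating term by term yields $Z(X,T)$ as an alternating product of Fredholm determinants $\det(1-T\alpha_i)^{\pm 1}$, each of which is a $p$-adic entire function; hence $Z(X,T)$ is $p$-adically meromorphic on all of $\mathbb{C}_p$.

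To conclude, I would apply Dwork's rationality criterion: a formal power series with integer coefficients that is $p$-adically meromorphic on a disk of sufficiently large radius and whose coefficients grow at most polynomially in an archimedean sense—both verified here, the latter from $\#X(\mathbb{F}_{p^k})\leq p^{kn}$—must lie in $\mathbb{Q}(T)$. This produces $R_1,R_2\in\mathbb{Q}[T]$ with $Z=R_1/R_2$; the refinement $R_1,R_2\in 1+T\mathbb{Z}[T]$ then follows by clearing denominators, applying Gauss's lemma to pass to primitive integer polynomials, and using $Z(X,0)=1$ to normalize the constant terms. The main obstacle throughout is the construction and continuity analysis of Dwork's operator, which is the genuinely delicate step; the reductions at the start and the application of the rationality criterion at the end are comparatively routine.
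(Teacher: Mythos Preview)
The paper does not prove this theorem at all: it merely records the statement and attributes it to Dwork and Grothendieck, treating rationality of the zeta function as a black box imported from the literature. Your proposal is therefore not comparable to a ``paper's own proof'' because there is none; rather, you have written a serviceable outline of Dwork's $p$-adic argument, which is precisely one of the two references the paper cites.

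As a sketch of Dwork's method your outline is accurate in its broad strokes: the reduction to additive-character sums via orthogonality, the construction of a completely continuous Frobenius-type operator on an overconvergent $p$-adic Banach space through the splitting function, the expression of $Z(X,T)$ as an alternating product of Fredholm determinants, and finally the Borel--Dwork rationality criterion combining $p$-adic meromorphy with the archimedean growth bound $\#X(\mathbb{F}_{p^k})\le p^{kn}$. The concluding step---Gauss's lemma plus the normalization $Z(X,0)=1$ to land in $1+T\mathbb{Z}[T]$---is also correct. Just be aware that in the context of this paper none of this is expected or needed: the author is only quoting the result in order to invoke Bombieri's degree bound and the ensuing point-counting algorithm, so a citation suffices.
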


The following theorem due to Bombieri \cite{Bombieri} gives an upper bound on the total degree $\mbox{deg }R_1+\mbox{deg }R_2$ of the zeta function.

\begin{thm}
With $R_1,R_2,d,n,m$ as above, we have:
\[
\mbox{deg }R_1+\mbox{deg }R_2<(4d+9)^{n+m}
\]
\end{thm}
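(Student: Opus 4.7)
The plan is to follow the $p$-adic approach of Dwork, refined by Bombieri, which bounds the total degree of $Z(X,T)$ via explicit dimensions of the $p$-adic cohomology spaces on which Frobenius acts.

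First I would convert the point-counting problem into an exponential sum. Using a non-trivial additive character $\psi\colon \mathbb{F}_p\to\mathbb{C}^\times$ together with the identity that $\sum_{t\in\mathbb{F}_{p^k}}\psi(\mathrm{Tr}(tz))$ equals $p^k$ if $z=0$ and $0$ otherwise, one has
\[
p^{mk}\#X(\mathbb{F}_{p^k})=\sum_{(t,x)\in\mathbb{F}_{p^k}^{m+n}}\psi\Bigl(\mathrm{Tr}\bigl(t_1 f_1(x)+\cdots+t_m f_m(x)\bigr)\Bigr).
\]
Thus $Z(X,T)$ is expressible, up to a trivial factor whose degree is easy to control, in terms of the $L$-function of the exponential sum attached to the polynomial $F(t,x)=\sum_{j}t_j f_j(x)$ in $n+m$ variables of total degree at most $d+1$.

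Next I would invoke Dwork's $p$-adic trace formula, which writes this $L$-function as an alternating product of characteristic series $\prod_{i}\det(1-T\alpha_F\mid H^i)^{\pm 1}$ of a Frobenius-like endomorphism $\alpha_F$ acting on certain $p$-adic Banach cohomology spaces built from $F$. The total degree $\mathrm{deg}\,R_1+\mathrm{deg}\,R_2$ is then bounded by the sum of the dimensions of these cohomology spaces. These spaces are quotients of spaces of $p$-adic power series whose growth is controlled by the Newton polytope of $F$, so their dimensions are majorized by counts of monomials of bounded total degree in $n+m$ variables. A crude multinomial estimate of the form $\binom{n+m+C(d+1)}{n+m}\leq (4d+9)^{n+m}$, summed over the $n+m+1$ cohomological degrees, then yields the stated bound.

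The main obstacle is pinning down the explicit constant $4d+9$. The natural execution of the sketch above gives a bound of the shape $C(d)^{n+m}$ with $C(d)$ somewhat larger than $4d+9$; achieving Bombieri's constant requires careful accounting of both numerator and denominator contributions in the Dwork decomposition and of the factor relating the exponential sum back to $\#X(\mathbb{F}_{p^k})$. For the applications in this paper any explicit exponential bound in $d,n,m$ would in fact suffice, so in practice I would be content with a weaker constant and cite Bombieri only for the sharp version.
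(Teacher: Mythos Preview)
The paper does not prove this theorem: it is quoted verbatim as a result of Bombieri \cite{Bombieri} and used as a black box to deduce Theorem~\ref{Ch4Bound}. There is therefore no ``paper's own proof'' to compare against.

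That said, your sketch is a fair high-level outline of what Bombieri actually does: reduce $\#X(\mathbb{F}_{p^k})$ to an exponential sum in $n+m$ variables via auxiliary variables $t_1,\dots,t_m$, express the associated $L$-function through Dwork's $p$-adic cohomology, and bound the total degree by the sum of the relevant cohomological dimensions, which in turn are controlled by monomial counts tied to the degree of $F(t,x)=\sum_j t_j f_j(x)$. Your candid remark that the naive execution yields only $C(d)^{n+m}$ with a larger $C(d)$, and that obtaining exactly $4d+9$ requires Bombieri's more careful bookkeeping, is accurate. For the purposes of this paper any explicit bound of that shape suffices, so citing Bombieri for the sharp constant---as the paper does---is the appropriate move.
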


The following algorithm, which can be found in \cite{Wan} shows that we can compute the zeta function by computing only finitely many of the $\#X(\mathbb{F}_{p^k})$ (specifically, for $k<2(4d+9)^{n+m}$). This determines the rest of the $\#X(\mathbb{F}_{p^k})$.

Let $D_1$ and $D_2$ be upper bounds for the degrees of $R_1$ and $R_2$, where $Z(X,T)=\frac{R_1(X,T)}{R_2(X,T)}$. For example, take $D_1=D_2=(4d+9)^{n+m}$ as given by the previous theorem.

Then, by counting $X(\mathbb{F}_{p^k})$ for $k\leq D_1+D_2$ (there are only finitely many elements in $\mathbb{F}_{p^k}^n$), we can compute the first $D_1+D_2+1$ terms in the power series $Z(X,T)$:
\[
Z(X,T)=1+z_1T+z_2T^2+\cdots+z_{D_1+D_2}T^{D_1+D_2}+\cdots
\]
Write
\begin{align*}
R_1(X)&=1+a_1T+\cdots +a_{D_1}T^{D_1}\\
R_2(X)&=1+b_1T+\cdots +b_{D_2}T^{D_2}
\end{align*}
Our goal is to determine the coefficients $a_i$ and $b_i$.

We have
\[
R_2(X)Z(X)\equiv R_1(X)\;(\mbox{mod }T^{D_1+D_2+1})
\]
This gives a linear system of equations in the $a_i$ and $b_i$. We know that it has a solution, which can be found using linear algebra. Thus the zeta function is determined.

This means that once we know $\#X(\mathbb{F}_{p^k})$ for $k\leq D_1+D_2$, the zeta function is determined, so $\#X(\mathbb{F}_{p^k})$ for $k>D_1+D_2$ are determined. In particular, if $\#X(\mathbb{F}_{p^k})=0$ for $k\leq D_1+D_2$, then $\#X(\mathbb{F}_{p^k})=0$ for all $k$, and $\#X(\bar{\mathbb{F}}_p)=0$. Applying this to the case of sums-of-squares formulas, we have the following result:

\begin{thm} \label{Ch4Bound}
Fix $[r,s,n]$ and prime $p$. If there is a sums-of-squares formula of type $[r,s,n]$ over $\bar{\mathbb{F}}_p$, then there is a sums-of-squares formula of type $[r,s,n]$ over $\mathbb{F}_{p^k}$ for some $k\leq 2\cdot 17^{rsn+r^2s^2}$.
\end{thm}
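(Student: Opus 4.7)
The plan is to apply the zeta function machinery just developed directly to the variety $X_{rsn}^{\mathbb{F}_p}$ defined by the polynomials in the reformulation, and then take the contrapositive. The variables $x_{ijk}$ with $1 \le i \le n$, $1 \le j \le r$, $1 \le k \le s$ give exactly $rsn$ variables, so the ambient dimension is $rsn$. Every defining polynomial from the proposition has total degree at most $2$, so we may take $d = 2$.

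Next I would count, or rather bound, the number $m$ of defining polynomials. The four families correspond bijectively to pairs of unordered monomials $x_j x_{j'} y_k y_{k'}$ with $j \le j'$ and $k \le k'$, of which there are $\binom{r+1}{2}\binom{s+1}{2}$. Using $\binom{r+1}{2} \le r^2$ (valid for $r \ge 1$) and the analogous inequality in $s$, we get $m \le r^2 s^2$. Then Bombieri's theorem applied to $X = X_{rsn}^{\mathbb{F}_p}$ gives
\[
\deg R_1 + \deg R_2 \;<\; (4d+9)^{rsn + m} \;\le\; 17^{rsn + r^2 s^2}.
\]
So we may take $D_1 = D_2 = 17^{rsn + r^2 s^2}$ in the algorithm, and the key output is that once the counts $\#X(\mathbb{F}_{p^k})$ for $k \le D_1 + D_2 = 2 \cdot 17^{rsn + r^2 s^2}$ are known, the entire zeta function, and hence every $\#X(\mathbb{F}_{p^k})$, is determined.

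Now I would contrapose. Suppose no formula exists over $\mathbb{F}_{p^k}$ for any $k \le 2 \cdot 17^{rsn + r^2 s^2}$. Then $\#X(\mathbb{F}_{p^k}) = 0$ for all such $k$, which forces $z_1 = \cdots = z_{D_1+D_2} = 0$ in the expansion $Z(X,T) = 1 + \sum_{i \ge 1} z_i T^i$. The linear system $R_2 Z \equiv R_1 \pmod{T^{D_1+D_2+1}}$ is then satisfied by $R_1 = R_2 = 1$, and by uniqueness of the rational expression with denominator and numerator in $1 + T\mathbb{Z}[T]$ of the given degree bounds, $Z(X,T) = 1$. Hence $\#X(\mathbb{F}_{p^k}) = 0$ for every $k$, so $X(\bar{\mathbb{F}}_p) = \emptyset$, contradicting the existence of a formula over $\bar{\mathbb{F}}_p$ (which would furnish a point of $X(\bar{\mathbb{F}}_p)$ by the reformulation).

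The only place requiring any care is the combinatorial bound $m \le r^2 s^2$ on the number of defining equations, since this is what produces the exponent $rsn + r^2 s^2$ in the statement; everything else is a direct substitution into Bombieri's inequality and the zeta-function reconstruction recipe already recorded above. This is not really an obstacle but simply bookkeeping, so the proof is essentially immediate once the right quantities are identified.
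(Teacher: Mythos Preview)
Your proposal is correct and follows exactly the approach the paper intends: the paper sets up Bombieri's bound and the zeta-function reconstruction, then simply states the theorem with the words ``applying this to the case of sums-of-squares formulas,'' leaving the substitution to the reader. You have carried out that substitution correctly --- $d=2$ gives $4d+9=17$, the ambient dimension is $rsn$, and your count $m=\binom{r+1}{2}\binom{s+1}{2}\le r^2s^2$ of the defining equations (which the paper never spells out) is exactly what is needed to produce the exponent $rsn+r^2s^2$.
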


Thus, in principle, sums-of-squares formulas can be detected using searches over finite fields. However, the size of the fields that would be required make this unrealistic.

\section{Lower Bound on Independence}

In this section, we provide an explicit lower bound on $p$ for the characteristic $0$ and characteristic $p$ cases to be equivalent, and prove that existence of a sums-of-squares formula over an algebraically closed field is computable. We do this by analyzing the coefficients which appear throughout Buchberger's Algorithm. Buchberger's Algorithm produces Gr{\"o}bner bases, from which we can decide if an ideal is proper, thus determine if sums-of-squares formulas exist. Exposition can be found in \cite{Eisenbud}.

For the situation of sums-of-squares formulas, we find it most convenient to work with degree reverse lexicographic order, which we denote by $>$. By $\textrm{deg}$, we mean the total degree of the monomial.

For Buchberger's algorithm, we first require the Division algorithm. The following results can be found in \cite{Eisenbud}.

\begin{prop}
Let $S=k[x_1,...,x_r]$ be a polynomial ring over a field $k$ with a monomial order $>$. If $f,g_1,...,g_t\in S$, then there is an expression
\begin{center}
$f=\sum f_ig_i\,+f'$ with $f',f_i\in S$,
\end{center}
where none of the monomials of $f'$ is in $(\textrm{in}(g_1),...,\textrm{in}(g_t))$ and
\begin{center}
$\textrm{in}(f)\geq \textrm{in}(f_ig_i)$
\end{center}
for every $i$. $f'$ is called a \emph{remainder} of $f$ with respect to $g_1,...,g_t$. The expression $f=\sum f_ig_i\,+f'$ as above is called a \emph{standard expression} for $f$ in terms of the $g_i$.
\end{prop}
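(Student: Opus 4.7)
The plan is to prove this by explicitly constructing the division algorithm and showing it terminates. I would proceed as follows.

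First, I would set up an iterative procedure. Initialize $h_0 = f$, $f_i^{(0)} = 0$ for each $i$, and $f'^{(0)} = 0$. At step $N$, assume we have maintained the invariants $f = \sum_i f_i^{(N)} g_i + f'^{(N)} + h_N$, no monomial appearing in $f'^{(N)}$ lies in $(\textrm{in}(g_1),\dots,\textrm{in}(g_t))$, and $\textrm{in}(f) \geq \textrm{in}(f_i^{(N)} g_i)$ for all $i$ as well as $\textrm{in}(f) \geq \textrm{in}(h_N)$ and $\textrm{in}(f) \geq \textrm{in}(f'^{(N)})$. If $h_N = 0$, stop. Otherwise examine $\textrm{in}(h_N)$. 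If some $\textrm{in}(g_i)$ divides $\textrm{in}(h_N)$, pick the smallest such $i$, write $\textrm{in}(h_N) = c \cdot m \cdot \textrm{in}(g_i)$ with $c \in k$ and $m$ a monomial, and set $f_i^{(N+1)} = f_i^{(N)} + c m$, $h_{N+1} = h_N - c m g_i$, $f'^{(N+1)} = f'^{(N)}$. If no $\textrm{in}(g_i)$ divides $\textrm{in}(h_N)$, set $f'^{(N+1)} = f'^{(N)} + \textrm{in}(h_N)$, $h_{N+1} = h_N - \textrm{in}(h_N)$, and leave the $f_i$ unchanged.

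Next I would verify that the invariants are preserved. In both cases the equation $f = \sum_i f_i^{(N+1)} g_i + f'^{(N+1)} + h_{N+1}$ holds by construction. In the first case, the leading terms cancel in $h_N - cmg_i$, so $\textrm{in}(h_{N+1}) < \textrm{in}(h_N) \leq \textrm{in}(f)$, and the new summand $cm g_i$ has initial term equal to $\textrm{in}(h_N)$, which is $\leq \textrm{in}(f)$. In the second case, the new monomial added to $f'$ is $\textrm{in}(h_N)$, which is not in $(\textrm{in}(g_1),\dots,\textrm{in}(g_t))$ by hypothesis, preserving the remainder condition.

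The main point, and the only subtle one, is termination: I need $\textrm{in}(h_N)$ to strictly decrease in the monomial order at each step, and this is precisely what the two cases ensure. Since a monomial order is by definition a well-order on the set of monomials of $S$, the sequence $\textrm{in}(h_0) > \textrm{in}(h_1) > \cdots$ must be finite, forcing some $h_N = 0$. At that point, setting $f_i = f_i^{(N)}$ and $f' = f'^{(N)}$ gives the desired standard expression, and the invariants give exactly the two required conditions on $f'$ and on $\textrm{in}(f_i g_i)$.

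I do not anticipate serious obstacles: the only nontrivial ingredient is the well-ordering property of monomial orders, which is part of the definition. The writeup really just amounts to checking the invariants carefully and invoking well-foundedness once.
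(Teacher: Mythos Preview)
Your proposal is correct and follows the standard division-algorithm construction; the paper does not give a self-contained proof of this proposition but cites Eisenbud and then states the Division Algorithm as the next proposition, which is the constructive content. Your version differs only in bookkeeping: you maintain a separate ``confirmed remainder'' $f'^{(N)}$ and an ``unprocessed part'' $h_N$, always inspecting $\textrm{in}(h_N)$, whereas the paper's formulation keeps a single running polynomial $f'_p$ and at each step removes the \emph{maximal} term of $f'_p$ divisible by some $\textrm{in}(g_i)$, stopping when no such term remains. Both terminate by a well-ordering argument and yield the same existence statement; the paper's variant avoids the ``move leading term to remainder'' step at the cost of scanning for the largest divisible term, while yours processes monomials strictly top-down. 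Neither buys anything essential over the other for this proposition.
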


The Division Algorithm gives us an explicit construction of a standard expression.

\begin{prop} \textbf{(Division Algorithm)}
Let $S=k[x_1,...,x_r]$ be a polynomial ring over a field $k$ with a monomial order.If $f,g_1,...,g_t\in S$, then we can produce a standard expression
\[
f=\sum m_ug_{s_u}+f'
\]
for $f$ with respect $g_1,...,g_t$. We do this by defining the indices $s_u$ and terms $m_u$ inductively. Write
\[
f'_p=f-\sum_{u=1}^pm_ug_{s_u}\neq 0
\]
and $m$ is the maximal term of $f_p'$ that is divisible by some $\textrm{in}(g_i)$, then we take
\begin{align*}
s_{p+1}&=i\\
m_{p+1}&=m/\textrm{in}(g_i)
\end{align*}
The process ends when $f'_p=0$ or when no $\textrm{in}(g_i)$ divides a monomial of $f'_p$. The remainder $f'$ is the last $f'_p$ produced.
\end{prop}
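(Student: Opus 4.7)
The plan is to verify two things: that the procedure terminates after finitely many steps, and that the expression it outputs satisfies the two defining properties of a standard expression (as in the preceding proposition). Since the algorithm's construction is already laid out, the proof is essentially bookkeeping around a well-ordering argument.

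For termination, I would track the sequence of selected terms $m$ across steps, call them $M_1, M_2, \ldots$, and argue that it is strictly decreasing in the monomial order. At step $p+1$ we pass from $f'_{p-1}$ to $f'_p = f'_{p-1} - (M_p/\textrm{in}(g_{s_p}))\, g_{s_p}$; this cancels the $M_p$ term of $f'_{p-1}$ exactly, and the other terms it introduces have the form $(M_p/\textrm{in}(g_{s_p}))\cdot t$ where $t$ is a non-leading term of $g_{s_p}$, so $t < \textrm{in}(g_{s_p})$ and hence each new term is strictly less than $M_p$. Any term of $f'_{p-1}$ strictly greater than $M_p$ cannot be divisible by any $\textrm{in}(g_i)$, by maximality of $M_p$; such terms survive into $f'_p$ but remain non-divisible and so are not eligible to be $M_{p+1}$. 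Thus every candidate for $M_{p+1}$ is strictly below $M_p$, giving $M_{p+1} < M_p$. Because the monomial order $>$ is a well-ordering on the monomials of $S$, no infinite strictly descending chain exists, and the algorithm halts.

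For correctness, the halting condition gives two cases: either $f'_p = 0$, or no $\textrm{in}(g_i)$ divides any monomial of $f'_p$. In either case the output $f'$ has no monomials lying in $(\textrm{in}(g_1), \ldots, \textrm{in}(g_t))$, which is the first defining property. For the leading-term inequality, I would induct on $p$ to show $\textrm{in}(f'_p) \leq \textrm{in}(f)$: the base case $f'_0 = f$ is immediate, and at the inductive step the subtracted polynomial $(M_p/\textrm{in}(g_{s_p}))\,g_{s_p}$ has leading term exactly $M_p$, which is already a term of $f'_{p-1}$, hence at most $\textrm{in}(f'_{p-1}) \leq \textrm{in}(f)$. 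Regrouping the output as $f = \sum_i f_i g_i + f'$ with $f_i = \sum_{u:\, s_u = i} m_u$, each summand $m_u g_{s_u}$ has leading term at most $\textrm{in}(f)$, so $\textrm{in}(f_i g_i) \leq \textrm{in}(f)$ as required.

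The only genuine subtlety is the termination argument, which is why the maximality choice of $m$ in the algorithm matters: without it, one could not rule out the selected term rising again after a subtraction. Everything else amounts to tracking leading terms through the recursion and invoking the well-ordering property of $>$.
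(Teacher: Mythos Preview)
Your proposal is correct and follows the same approach as the paper. The paper's own justification is a single sentence---``This algorithm ends after finitely many steps since the maximal term of $f'_p$ divisible by some $\textrm{in}(g_i)$ decreases at each step''---which is precisely your termination argument via the strictly decreasing sequence $M_1>M_2>\cdots$ and the well-ordering of the monomial order; your additional verification that the output is indeed a standard expression is routine bookkeeping that the paper omits, deferring instead to \cite{Eisenbud}.
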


This algorithm ends after finitely many steps since the maximal term of $f'_p$ divisible by some $\textrm{in}(g_i)$ decreases at each step.

Buchberger's Criterion allows us to determine if a set of elements is a Gr{\"o}bner basis.

\begin{prop} \textbf{(Buchberger's Criterion)} 
Let $S=k[x_1,...,x_r]$ be a polynomial ring with $k$ a field, and $g_1,...,g_t\in S$ nonzero elements.  For each pair of indices $i,j$, define
\[
m_{ij}=\textrm{in}(g_i)/GCD(\textrm{in}(g_i),\textrm{in}(g_j))\in S
\]
Then choose a standard expression
\[
m_{ji}g_i-m_{ij}g_j=\sum f_u^{(ij)}g_u+h_{ij}
\]
for $m_{ji}g_i-m_{ij}g_j$ with respect to $g_1,...,g_t$.

Then the elements $g_1,...,g_t$ form a Gr{\"o}bner basis if and only if $h_{ij}=0$ for all $i$ and $j$.
\end{prop}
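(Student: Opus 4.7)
The plan is to prove both implications; the forward direction is nearly immediate, while the reverse is the heart of the proposition and requires a well-ordering argument combined with a syzygy lemma. For the forward direction, assume $g_1,\ldots,g_t$ form a Gr\"obner basis of $I := (g_1,\ldots,g_t)$. Each $m_{ji}g_i - m_{ij}g_j$ lies in $I$, and the standard expression writes $h_{ij} = (m_{ji}g_i - m_{ij}g_j) - \sum_u f_u^{(ij)}g_u \in I$. If $h_{ij}$ were nonzero, then by the Gr\"obner basis hypothesis $\textrm{in}(h_{ij}) \in \textrm{in}(I) = (\textrm{in}(g_1),\ldots,\textrm{in}(g_t))$, so some $\textrm{in}(g_u)$ would divide the leading monomial of $h_{ij}$, contradicting the remainder condition from the Division Algorithm.

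For the reverse direction, assume $h_{ij}=0$ for all $i,j$, and let $f \in I$ be nonzero. I want to show $\textrm{in}(f)$ is divisible by some $\textrm{in}(g_u)$. Choose a representation $f = \sum_u f_u g_u$ minimizing $M := \max_u \textrm{in}(f_u g_u)$ with respect to the monomial order; this minimum exists because monomial orders are well-orderings on monomials. Since $\textrm{in}(f) \leq M$, we are done if equality holds, as then $\textrm{in}(f) = \textrm{in}(f_u g_u) = \textrm{in}(f_u)\textrm{in}(g_u)$ for some $u$, giving the needed divisibility.

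The main obstacle is the case $\textrm{in}(f) < M$, which forces top-degree cancellation. Let $I_M := \{u : \textrm{in}(f_u g_u) = M\}$; cancellation means $\sum_{u \in I_M} \textrm{LC}(f_u)\textrm{LC}(g_u) = 0$. The essential tool is a syzygy lemma: such a zero-sum combination of monomial multiples of the $g_u$ with common leading monomial $M$ telescopes into a $k$-linear combination of scaled S-polynomials $\mu_{ij}(m_{ji}g_i - m_{ij}g_j)$ for pairs $i,j \in I_M$, where each $\mu_{ij}$ is a monomial scaling the S-polynomial so that its original leading term would sit at $M$. By the hypothesis $h_{ij}=0$, each S-polynomial equals $\sum_u f_u^{(ij)}g_u$ with $\max_u \textrm{in}(f_u^{(ij)}g_u) \leq \textrm{in}(m_{ji}g_i - m_{ij}g_j)$, and crucially this bound is strictly less than $\textrm{lcm}(\textrm{in}(g_i),\textrm{in}(g_j))$ because the S-polynomial is designed to cancel the common top term. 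Multiplying by $\mu_{ij}$ preserves this strict inequality relative to $M$, so substituting back produces a new representation $f = \sum_u f_u'g_u$ with every $\textrm{in}(f_u'g_u) < M$, contradicting minimality. The delicate step is the telescoping bookkeeping in the syzygy lemma, but this reduces to the standard computation of writing $\textrm{LT}(f_u)g_u = c_u q_u$ with monic $q_u$ of leading monomial $M$ and observing that $\sum c_u q_u = \sum_u c_u(q_u - q_{u_0})$ for any fixed $u_0 \in I_M$, with each $q_u - q_{u_0}$ a monomial multiple of the appropriate scaled S-polynomial.
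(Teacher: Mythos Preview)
The paper does not supply its own proof of Buchberger's Criterion; it states the result as background material quoted from Eisenbud's \emph{Commutative Algebra}, so there is no in-paper argument to compare against. Your proof is correct and is essentially the standard textbook argument (the one Eisenbud gives): the forward direction is immediate from the remainder condition, and the reverse direction runs the minimality argument on $M=\max_u \textrm{in}(f_u g_u)$, using the syzygy observation that top-term cancellation decomposes into monomial multiples of S-polynomials, each of which (by the hypothesis $h_{ij}=0$) admits a standard expression with all terms strictly below $M$. One small clarification worth making explicit in your telescoping step: the reason $q_u - q_{u_0}$ is exactly a monomial multiple of the S-polynomial is that both $\textrm{in}(g_u)$ and $\textrm{in}(g_{u_0})$ divide $M$, hence so does their lcm, and factoring out $M/\textrm{lcm}$ yields the S-polynomial on the nose.
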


This criterion leads to Buchberger's Algorithm, which allows us to construct Gr{\"o}bner bases.

\begin{prop} \textbf{(Buchberger's Algorithm)}
In the situation of Buchberger's Criterion, suppose $I\subset S$ is an ideal, and let $g_1,...,g_t\in I$ be a set of generators of $I$. If all the $h_{ij}=0$, then the $g_i$ form a Gr{\"o}bner basis for $I$. If some $h_{ij}\neq0$, then replace $g_1,...,g_t$ with $g_1,...,g_t,h_{ij}$, and repeat the process. The ideal generated by the initial terms of $g_1,...,g_t,h_{ij}$ is strictly larger than the ideal generated by the initial terms of $g_1,...,g_t$, so this process terminates after finitely many steps (since $S$ is noetherian). 
\end{prop}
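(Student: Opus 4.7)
The proposition has two assertions: correctness of the termination criterion ``if all $h_{ij}=0$ then $g_1,\ldots,g_t$ is a Gr\"obner basis'', and finite termination of the update process. The plan is to deduce the first directly from Buchberger's Criterion stated just above, and to reduce the second to the ascending chain condition in $S$ via two small lemmas.

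First I would verify that each augmentation step preserves the ideal: when $h_{ij}\neq 0$ and we replace $(g_1,\ldots,g_t)$ by $(g_1,\ldots,g_t,h_{ij})$, note that by definition of a standard expression,
\[
h_{ij}=m_{ji}g_i-m_{ij}g_j-\sum_u f_u^{(ij)}g_u,
\]
which is a polynomial combination of the current $g_\ell$'s, hence lies in $I$. So the augmented list still generates $I$, and Buchberger's Criterion can legitimately be re-applied to it at the next iteration.

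Next I would establish that the monomial ideal of initial terms strictly grows at each augmentation. By the defining property of the remainder in a standard expression, \emph{no} monomial of $h_{ij}$ lies in $(\textrm{in}(g_1),\ldots,\textrm{in}(g_t))$; in particular $\textrm{in}(h_{ij})\notin (\textrm{in}(g_1),\ldots,\textrm{in}(g_t))$. Therefore
\[
(\textrm{in}(g_1),\ldots,\textrm{in}(g_t))\subsetneq (\textrm{in}(g_1),\ldots,\textrm{in}(g_t),\textrm{in}(h_{ij})),
\]
the containment being strict because the right-hand side contains $\textrm{in}(h_{ij})$ while the left does not.

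Finally, termination: the successive monomial ideals generated by initial terms of the current generating list form a strictly ascending chain of ideals in $S$; since $S$ is noetherian, this chain must stabilize, so after finitely many steps no nonzero $h_{ij}$ can be produced, and Buchberger's Criterion certifies a Gr\"obner basis. The only step requiring any genuine care is the strict-growth claim, and the main obstacle there is really just invoking the correct property of the remainder given by the Division Algorithm; everything else is formal.
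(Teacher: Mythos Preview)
Your proof is correct and complete. The paper itself does not supply a separate proof of this proposition: it is stated as a standard result from \cite{Eisenbud}, with the termination argument (strict growth of the initial ideal plus noetherianity of $S$) already folded into the proposition statement. Your write-up simply makes explicit the two points the paper leaves implicit---that $h_{ij}\in I$ so the generating set is preserved, and that the remainder property forces $\textrm{in}(h_{ij})\notin(\textrm{in}(g_1),\ldots,\textrm{in}(g_t))$---so there is no substantive difference in approach.
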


Computing a Gr{\"o}bner basis would give us a clearer idea of the structure of the scheme of sums-of-squares formula. However, this is only computationally feasible for very small values of $r,s,n$. The main question we would like to answer is when $1$ is in the relevant ideal (since when the ideal is not proper, no sums-of-squares formula exists). By studying the coefficients that can arise in the Division Algorithm and Buchberger's Algorithm, we show that for ``large enough'' $p$, the algebraically closed cases in characteristic $0$ and characteristic $p$ coincide.

In order to establish this upper bound, we need some understanding of the complexity of Buchberger's Algorithm. This is provided by \cite{Dube}, in which it is shown:

\begin{thm}
Let $B$ be a set of polynomials with Gr{\"o}bner basis $G$. If there are $n$ variables and the polynomials in $B$ have total degree $\leq d$, then the degree of the polynomials in $G$ is bounded by
\[
2\left(\frac{1}{2}d^2+d\right)^{2^{n-1}}
\]
\end{thm}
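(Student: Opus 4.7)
The plan is to follow the cone decomposition strategy introduced by Dub{\'e}, which reduces the degree bound for a Gr{\"o}bner basis to a combinatorial analysis of how the complement of the initial ideal fills up the monomial lattice. The point is that $\mathrm{in}(I)$ is a monomial ideal, so $S/\mathrm{in}(I)$ is spanned (as a $k$-vector space) by a set of monomials, and one can hope to decompose this set into combinatorially simple pieces whose shape controls the degrees of the generators of $\mathrm{in}(I)$ (and hence of any reduced Gr{\"o}bner basis $G$ of $I$).

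First, I would define a \emph{cone} in $S=k[x_1,\ldots,x_n]$ as a set of monomials of the form $x^\alpha\cdot k[y_1,\ldots,y_\ell]$ where $\{y_1,\ldots,y_\ell\}\subseteq\{x_1,\ldots,x_n\}$, and a \emph{cone decomposition} of $S/\mathrm{in}(I)$ as a disjoint union of such cones which jointly span the quotient. The first step is to show that for any ideal $I$ generated by polynomials of degree $\leq d$ in $n$ variables, there exists a cone decomposition of $S/\mathrm{in}(I)$ whose ``base degrees'' (the values $|\alpha|$) are bounded by a quantity $\Phi(d,n)$ defined by an explicit recursion. The construction is inductive: one isolates the behavior along a single variable, writes the ideal as a sum of an ``initial part'' and a ``quotient part,'' and decomposes each piece using cone decompositions already obtained by the inductive hypothesis on a polynomial ring with one fewer variable.

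Second, I would track the recursion to obtain the doubly-exponential shape of the bound. The base case $n=1$ is essentially linear in $d$, since initial ideals in $k[x_1]$ are principal. At each inductive step, combining a bound $\Phi(d,n-1)$ for the ``lower-variable'' piece with the analysis along the distinguished variable forces $\Phi(d,n)$ to satisfy a relation of roughly the form $\Phi(d,n)\sim \Phi(d,n-1)^2$, which upon unwinding produces an exponent of the form $2^{n-1}$. Careful accounting of constants, including the passage from base degrees of the decomposition to actual Gr{\"o}bner basis generator degrees (which requires an additive adjustment since the generators of $\mathrm{in}(I)$ sit just beyond the cones of $S/\mathrm{in}(I)$), is what yields the specific expression $2\bigl(\tfrac{1}{2}d^2+d\bigr)^{2^{n-1}}$.

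Third, I would translate the cone decomposition bound into the stated bound on the degrees of elements of $G$. Since the elements of the reduced Gr{\"o}bner basis are in bijection with the minimal monomial generators of $\mathrm{in}(I)$, and each such generator lies on the ``boundary'' of the cone decomposition of $S/\mathrm{in}(I)$, a base-degree bound on the cones forces a degree bound on these generators, and this same bound controls the total degree of the corresponding $G$-elements via standard properties of reduced Gr{\"o}bner bases. The main obstacle is the inductive construction of the cone decomposition together with the tight recursion on base degrees: one must ensure that, when new $S$-pair remainders are produced in the Buchberger sense, the decomposition can be refined without compounding the squaring step more than once per variable, since otherwise the bound degrades. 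This sharpness-preserving bookkeeping is the heart of Dub{\'e}'s argument, and is what gives a bound that matches, up to the $2^{n-1}$ exponent, the known doubly-exponential examples of Mayr and Meyer.
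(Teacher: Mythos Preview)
The paper does not prove this theorem at all: it is quoted verbatim as a result of Dub\'{e} \cite{Dube} and used as a black box to bound the degrees appearing in Buchberger's Algorithm. So there is no ``paper's own proof'' to compare against beyond the citation.

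That said, your outline is a faithful high-level summary of Dub\'{e}'s actual argument, which is precisely what the paper is invoking. A couple of points of precision if you intend to flesh this out: Dub\'{e} works with \emph{exact} cone decompositions of both the ideal and its complement (in the homogeneous setting), and the recursion that produces the $2^{n-1}$ exponent runs through Macaulay-type inequalities on Hilbert functions rather than a naive ``squaring per variable'' step; the quantity that doubles is tied to the number of cones of each dimension, not directly to the degree bound. Also, the passage from cone base degrees to Gr\"{o}bner generator degrees goes via the Hilbert polynomial of $S/\mathrm{in}(I)$ rather than a direct boundary argument. None of this changes the shape of your sketch, but the middle paragraph as written would not quite close without these refinements.
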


In the case of sums-of-squares formulas of type $[r,s,n]$, we are considering a set $B$ of polynomials in $rsn$ variables with total degree $\leq 2$. Thus the degree of the polynomials in a Gr{\"o}bner basis is bounded by $2\cdot 4^{2^{rsn-1}}$.

We now use the above results to analyze the coefficients of the elements in a Gr{\"o}bner basis in the case of sums-of-squares formulas.

We have that a sums-of-squares formula of type $[r,s,n]$ exists over an algebraically closed field $F$ if and only if the ideal $I$ generated by
\[
\left\{\begin{array}{ll}
	\sum_i (x_{ijk}^2) -1 & \textrm{for all } j,k\\
	\sum_i x_{ijk}x_{ij'k} & \textrm{for } j < j' \textrm{ and all }k\\
	\sum_i x_{ijk}x_{ijk'} & \textrm{for all } j \textrm{ and } k < k'\\
	\sum_i (x_{ijk}x_{ij'k'}+x_{ijk'}x_{ij'k}) & \textrm{for } j<j' \textrm{ and } k<k'\\
\end{array}\right\}
\]
is proper in $S=F[\{x_{ijk}\}]$. This can be determined by computing a Gr{\"o}bner basis. Note that the polynomials in this generating set all have coefficients $\pm 1$ or $0$. We will carefully track the coefficients of polynomials over $\mathbb{Q}$ through the division algorithm and Buchberger's algorithm in order to obtain an upper bound on $p$ for which there can be a difference between the characteristic $0$ and characteristic $p$ cases.

For $\frac{a}{b}\in \mathbb{Q}$ with $a,b\in\mathbb{Z}$ relatively prime, we set the notation
\[
P\left(\frac{a}{b}\right)=\max\{\vert a\vert,\vert b\vert\}
\]
We prove some basic properties of $P$ before proceeding with the analysis of Buchberger's algorithm.

\begin{prop}
Let $x,y\in \mathbb{Q}$.
\begin{enumerate}
\item $P\left(\frac{1}{x}\right)=P(x)$
\item $P(-x)=P(x)$
\item $P(xy)\leq P(x)P(y)$
\item $P(x+y)\leq 2P(x)P(y)$
\end{enumerate}
\end{prop}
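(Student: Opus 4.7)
The approach is to write each rational number in lowest terms as $x = a/b$ and $y = c/d$ with $\gcd(a,b) = \gcd(c,d) = 1$, then verify each inequality by direct computation, using the key observation that reducing a fraction to lowest terms can only weakly decrease the absolute values of its numerator and denominator.

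Parts (1) and (2) are essentially immediate from the definition: $1/x = b/a$ is already in lowest terms (same $\gcd$), so $P(1/x) = \max\{|b|,|a|\} = P(x)$; and $-x = (-a)/b$ is in lowest terms with $|-a| = |a|$, giving $P(-x) = P(x)$.

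For (3), I would write $xy = ac/(bd)$, which need not be in lowest terms; letting $g = \gcd(ac,bd)$, one has $P(xy) = \max\{|ac|/g, |bd|/g\} \le \max\{|ac|,|bd|\}$. Since $|a|,|b|\le P(x)$ and $|c|,|d|\le P(y)$, both $|ac|$ and $|bd|$ are at most $P(x)P(y)$, and the inequality follows. For (4), expand $x+y = (ad+bc)/(bd)$; again reduction only shrinks absolute values, so $P(x+y) \le \max\{|ad+bc|, |bd|\}$. The denominator satisfies $|bd| \le P(x)P(y)$, and by the triangle inequality $|ad+bc| \le |ad| + |bc| \le 2P(x)P(y)$, yielding the claimed bound.

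There is no real obstacle here; the only subtlety is remembering that $ac/(bd)$ and $(ad+bc)/(bd)$ are typically not in lowest terms, so one must explicitly note that passing to lowest terms cannot increase $P$. This is immediate from the fact that dividing numerator and denominator by a common positive divisor does not increase either absolute value. The factor of $2$ in (4) is unavoidable in general and comes precisely from the triangle inequality $|ad+bc| \le |ad|+|bc|$.
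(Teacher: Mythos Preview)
Your proof is correct and follows essentially the same approach as the paper: write $x=a/b$, $y=c/d$ in lowest terms and bound $\max\{|ac|,|bd|\}$ and $\max\{|ad+bc|,|bd|\}$ directly. If anything, you are slightly more careful than the paper, which writes $P(ac/bd)=\max\{|ac|,|bd|\}$ as an equality without noting that $ac/bd$ may not be reduced; your observation that reduction can only decrease $P$ makes this step rigorous.
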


\begin{proof}
(1) and (2) are obvious.

Write $x=\frac{a}{b}$ with $a,b\in\mathbb{Z}$ relatively prime and $y=\frac{c}{d}$ with $c,d\in\mathbb{Z}$ relatively prime.
\begin{align*}
P(xy)&=P\left(\frac{ac}{bd}\right)\\
&=\max\{\vert ac\vert,\vert bd\vert\}\\
&\leq \max\{\vert a\vert,\vert b\vert\}\cdot\max\{\vert c\vert,\vert d\vert\}\\
&=P(x)P(y)\\
P(x+y)&=P\left(\frac{ad+bc}{bd}\right)\\
&=\max\{\vert ad+bc\vert,\vert bd\vert\}\\
&\leq 2\max\{\vert a\vert,\vert b\vert\}\cdot\max\{\vert c\vert,\vert d\vert\}\\
&=2P(x)P(y)
\end{align*}
\end{proof}

For simplicity, we will say that a polynomial $f$ has coefficients with $P$ bounded by $M$ if for every coefficient $a_i$ of $f$, we have $P(a_i)\leq M$.

\begin{prop}
Suppose we have $g_1,...,g_t\in S=\mathbb{Q}[x_1,...,x_r]$. Suppose further that every $g_i$ has total degree $\leq t$ and has coefficients with $P$ bounded by some number $M$. Then the $h_{ij}$ as computed in Buchberger's criterion have coefficients with $P$ bounded by 
\[
2^{3\cdot 2^p-2}M^{5\cdot 2^p-3}
\]
where $p=\left(\begin{array}{c}
rsn+2t\\
2t
\end{array}
\right)$.
\end{prop}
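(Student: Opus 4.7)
My plan is to track the $P$-size of rational coefficients through the three computational stages that produce $h_{ij}$: forming the $S$-polynomial $S_{ij}:=m_{ji}g_i-m_{ij}g_j$, running the Division Algorithm on $S_{ij}$ against $g_1,\dots,g_t$, and bounding the number of division iterations. For the first stage, since $m_{ji}$ and $m_{ij}$ are (after normalizing) monomials whose rational coefficients come from the inverses of leading coefficients of $g_i$ and $g_j$, applying $P(1/a)=P(a)\leq M$, $P(xy)\leq P(x)P(y)$, and $P(x-y)\leq 2P(x)P(y)$ from the previous proposition yields an explicit bound $C_0M^{c_0}$ on the coefficients of $S_{ij}$ for small constants $C_0,c_0$.

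For the per-step Division Algorithm analysis, let $N_k$ denote a $P$-bound on the coefficients of the intermediate polynomial $f'_k$. The $(k+1)$st step picks a term $cX^\beta$ of $f'_k$ (so $P(c)\leq N_k$), divides by the leading term $a_iX^{\alpha_i}$ of some $g_i$ (a factor of $M$ via $P(1/a_i)\leq M$), multiplies the resulting monomial by $g_i$ coefficient-wise (another factor of $M$), and then subtracts from $f'_k$ (contributing the factor of $2$ and squaring $N_k$ via $P(x-y)\leq 2P(x)P(y)$). This gives a recursion of the form $N_{k+1}\leq 2N_k^2M^2$, in which $N_k$ essentially squares each iteration.

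The crucial observation for bounding the number of iterations is that the Division Algorithm never increases total degree, so every $f'_k$ has degree at most $\deg(S_{ij})\leq 2t$ and every monomial that ever appears lies in the finite set of monomials of total degree $\leq 2t$ in the ambient $rsn$ variables, a set of cardinality $p=\binom{rsn+2t}{2t}$. Since the maximal monomial of $f'_k$ divisible by some $\mathrm{in}(g_i)$ strictly decreases at each step, the algorithm terminates in at most $p$ iterations. Writing $N_k=2^{a_k}M^{b_k}$ reduces the recursion to linear recurrences for $a_k$ and $b_k$ of the shape $a_{k+1}=2a_k+\alpha$, $b_{k+1}=2b_k+\beta$; solving these and evaluating at $k=p$ with the initial conditions supplied by stage one produces the stated bound $2^{3\cdot 2^p-2}M^{5\cdot 2^p-3}$.

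The main obstacle is the detailed bookkeeping in the first two stages: both the constants $C_0,c_0$ and the precise shape of the per-step recursion depend on convention choices (whether $\mathrm{in}$ carries its leading coefficient, how the $\mathrm{GCD}$ is normalized, whether the subtraction at each step also eliminates interior terms that happen to cancel), and hitting the exact exponents $3\cdot 2^p-2$ and $5\cdot 2^p-3$ on the nose requires making these choices consistently throughout. Any reasonable choice yields a bound of the qualitative form $2^{O(2^p)}M^{O(2^p)}$, which is all that is needed downstream when this estimate is iterated through Buchberger's Algorithm in the next section.
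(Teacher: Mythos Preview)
Your proposal is correct and follows essentially the same approach as the paper: bound the $S$-polynomial $m_{ji}g_i-m_{ij}g_j$ by a small constant times a power of $M$ (the paper gets $2M^2$), derive a quadratic recursion $N_{k+1}\leq cM^eN_k^2$ for the division-step remainders (the paper uses $c=4$, $e=3$, slightly looser than your $c=2$, $e=2$), solve it in closed form as $c^{2^k-1}a_0^{2^k}$, and cap the number of iterations by the monomial count $p=\binom{rsn+2t}{2t}$ since the maximal divisible term strictly decreases in a graded order. Your acknowledgment that the precise exponents depend on normalization conventions is apt; the paper's specific choices are what produce exactly $2^{3\cdot 2^p-2}M^{5\cdot 2^p-3}$, and your tighter recursion would in fact give a slightly smaller bound that still implies the stated one.
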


\begin{proof}
We have
\[
m_{ij}=\textrm{in}(g_i)/GCD(\textrm{in}(g_i),\textrm{in}(g_j))
\]
For convenience, we take the GCD to have coefficient $1$. Then, since $\textrm{in}(g_i)$ has coefficient with $P$ bounded by $M$, $m_{ij}$ has the same coefficient with $P$ bounded by $M$.

Then $m_{ji}g_i$ has coefficients with $P$ bounded by $M^2$, so $m_{ji}g_i-m_{ij}g_j$ has coefficients with $P$ bounded by $2M^2$. Note that the total degree of $m_{ji}g_i-m_{ij}g_j$ is $\leq 2t$.

We would like a bound on $h_{ij}$ in the standard expression
\[
m_{ji}g_i-m_{ij}g_j=\sum f_u^{(ij)}g_u+h_{ij}
\]
which we achieve in the following lemma, and the proposition follows immediately.
\end{proof}

\begin{lem} \label{GrobLemma}
Suppose we have $f,g_1,...,g_t\in S$, where $f$ has coefficients with $P$ bounded by $2M^2$ and every $g_i$ has coefficients with $P$ bounded by $M$. Suppose also that $f$ has total degree $d$. Then, in any standard expression
\[
f=\sum m_ug_{s_u}+f'
\]
the $f'$ has coefficients with $P$ bounded by
\[
2^{3\cdot 2^p-2}M^{5\cdot 2^p-3}
\]
where $p=\left(\begin{array}{c}
rsn+d\\
d
\end{array}
\right)$.
\end{lem}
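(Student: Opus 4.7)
The plan is to prove the lemma by induction on the number of iterations of the Division Algorithm, tracking how the $P$-bound on the coefficients of the partial remainder $f'_k = f - \sum_{u=1}^{k} m_u g_{s_u}$ grows. First I would observe that the Division Algorithm terminates in at most $p = \binom{rsn+d}{d}$ steps. Since we work in degree reverse lexicographic order, $\textrm{in}(g_i)$ has the maximal total degree in $g_i$, so the product $m_{k+1} g_{s_{k+1}}$ has total degree equal to that of the maximal term it cancels. Hence $\deg f'_k \leq d$ throughout the algorithm, and because the maximal term of $f'_k$ divisible by some $\textrm{in}(g_i)$ strictly decreases at each step, it takes at most $\binom{rsn+d}{d}$ distinct values (the number of monomials of total degree $\leq d$ in the $rsn$ variables).

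Next I would set up the recurrence. If $m = c_m x^\alpha$ is the chosen maximal term of $f'_k$ and $\textrm{in}(g_i) = c_i x^\beta$, then $m_{k+1} = (c_m/c_i) x^{\alpha-\beta}$ has coefficient bounded by $P(c_m/c_i) \leq P(c_m) P(c_i) \leq C_k M$ by properties (1)--(3) of $P$. So every coefficient of $m_{k+1} g_{s_{k+1}}$ has $P \leq C_k M \cdot M = C_k M^2$, and applying property (4) termwise, each coefficient of $f'_{k+1} = f'_k - m_{k+1} g_{s_{k+1}}$ has $P \leq 2 C_k \cdot C_k M^2 = 2 C_k^2 M^2$, which is majorized (using $M \geq 1$) by $C_{k+1} \leq 4 C_k^2 M^3$.

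Finally, writing $C_k = 2^{a_k} M^{b_k}$, the initial condition $C_0 = 2M^2$ gives $a_0 = 1$, $b_0 = 2$, and the recurrence yields $a_{k+1} = 2a_k + 2$ and $b_{k+1} = 2b_k + 3$. These linear first-order recurrences are solved by the substitutions $a_k + 2 = 2^k(a_0 + 2) = 3 \cdot 2^k$ and $b_k + 3 = 2^k(b_0 + 3) = 5 \cdot 2^k$, giving $a_k = 3 \cdot 2^k - 2$ and $b_k = 5 \cdot 2^k - 3$. Evaluating at $k = p$ yields the stated estimate. The main difficulty will be careful bookkeeping—verifying that degree reverse lexicographic order really keeps $\deg f'_k \leq d$ (so that the total degree never grows past $d$ and the step count $\binom{rsn+d}{d}$ is valid), and that the factor-of-two loss from property (4) in each subtraction gets tracked precisely enough for the closed form to match the exponents $3 \cdot 2^p - 2$ and $5 \cdot 2^p - 3$ in the statement.
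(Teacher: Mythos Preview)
Your proposal is correct and follows essentially the same approach as the paper's proof: track a bound $C_k$ on the coefficients of the partial remainders $f'_k$, derive the recursion $C_{k+1} \le 4C_k^2 M^3$ from the properties of $P$, solve it with initial value $C_0 = 2M^2$ to obtain $2^{3\cdot 2^k - 2} M^{5\cdot 2^k - 3}$, and bound the number of division steps by the number of monomials of total degree $\le d$ in $rsn$ variables. Your write-up is in fact a bit more careful than the paper's in two places---you explicitly justify why $\deg f'_k \le d$ persists under a graded order, and you first obtain the sharper bound $2C_k^2 M^2$ before relaxing it to $4C_k^2 M^3$---but the argument is the same.
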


\begin{proof}
For the first step of the division algorithm, we take $m$ to be the maximal term of $f$ that is divisible by some $\textrm{in}(g_i)$ and choose
\begin{align*}
s_1&=i\\
m_1&=m/\textrm{in}(g_i)
\end{align*}
$m$ has coefficient with $P$ bounded by $2M^2$ and $\textrm{in}(g_i)$ has coefficient with $P$ bounded by $M$, so $m_1$ has coefficient with $P$ bounded by $2M^3$.

Then
\[
f'_1=f-m_1g_{s_1}
\]
has coefficients with $P$ bounded by $2\cdot 2M^2\cdot2M^3M=8M^6$.

Suppose $f'_p$ has coefficients with $P$ bounded by $N$. Let $m$ be the maximal term of $f$ that is divisible by some $\textrm{in}(g_i)$, and choose
\begin{align*}
s_{p+1}&=i\\
m_{p+1}&=m/\textrm{in}(g_i)
\end{align*}
$m$ has coefficient with $P$ bounded by $N$ and $\textrm{in}(g_i)$ has coefficient with $P$ bounded by $M$, so $m_{p+1}$ has coefficient with $P$ bounded by $2M^2N$.

Then
\[
f'_{p+1}=f'_p-m_{p+1}g_{s_{p+1}}
\]
has coefficients with $P$ bounded by $2\cdot N\cdot 2M^2NM=4M^3N^2$.

Observe that the recursion $a_{n+1}=ca_n^2$ has closed form given by $a_n=c^{2^n-1}a_0^{2^n}$.

Thus, since $f'_0=f$ has coefficients with $P$ bounded by $2M^2$, we have that $f'_p$ has coefficients with $P$ bounded by $(4M^3)^{2^p-1}(2M^2)^{2^p}=2^{3\cdot 2^p-2}M^{5\cdot 2^p-3}$.

We now consider how many steps must be taken in the division algorithm. Now, $\textrm{in}(f'_p)<\textrm{in}(f'_{p-1})$ (in the monomial order) and $f$ has total degree $t$, so the total number of steps to produce the standard expression must be at most the number of monomials of total degree $\leq d$. (Note that this depends on us having chosen a homogeneous order.) In $rsn$ variables, this is given by:
\[
p=\left(
\begin{array}{c}
rsn+d\\
d
\end{array}
\right)
\]
\end{proof}

We now have a bound on $P$ in the $h_{ij}$ computed in Buchberger's Criterion. The next step is to consider how this develops as we proceed through Buchberger's Algorithm in the case of sums-of-squares formulas.

\begin{prop} \label{GrobProp}
Suppose $g_1,...,g_t$ generate an ideal $I$. Suppose also that the $g_i$ all have total degree $2$ and have coefficients with $P$ bounded by $1$. After $m$ steps of Buchberger's Algorithm, the candidates for a Gr{\"o}bner basis all have coefficients with $P$ bounded by
\[
\left(2^{3\cdot 2^p-2}\right)^{\left(\left(5\cdot 2^p-3\right)^{m+1}-1\right)/\left(5\cdot 2^p-4\right)}
\]
where $p=\left(\begin{array}{c}rsn+4^{2^{n-1}+1}\\4^{2^{n-1}+1}\end{array}\right)$.
\end{prop}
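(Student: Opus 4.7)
The plan is to prove the bound by induction on $m$, using the single-step bound from the preceding proposition as the inductive engine. Write $M_m$ for the expression
\[
\left(2^{3\cdot 2^p-2}\right)^{\left(\left(5\cdot 2^p-3\right)^{m+1}-1\right)/\left(5\cdot 2^p-4\right)},
\]
so the claim is that every candidate after $m$ Buchberger steps has $P$-value at most $M_m$. The base case $m=0$ is immediate, since the original generators satisfy $P\leq 1$ while $M_0 = 2^{3\cdot 2^p-2}\geq 1$. The inductive step combines the one-step bound with the algebraic identity
\[
1 + q\cdot(q^{m+1}-1)/(q-1) = (q^{m+2}-1)/(q-1)
\]
(for $q = 5\cdot 2^p - 3$), which telescopes the recursion $M_{m+1}=2^{3\cdot 2^p-2}M_m^{5\cdot 2^p-3}$ into the stated closed form.

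Before invoking the preceding proposition I first need to justify the choice of the binomial coefficient $p$. The earlier proposition's constants depend on the total degree of the polynomials fed to the division algorithm, so I would apply Dube's theorem once, at the outset, to the original generators (polynomials in $rsn$ variables of total degree $2$). This bounds the total degree of every polynomial that can ever appear as a Buchberger candidate, and consequently each $S$-polynomial $m_{ji}g_i - m_{ij}g_j$ has total degree at most $4^{2^{n-1}+1}$, matching the value inside the binomial coefficient defining $p$ in the statement. This uniform degree bound is used repeatedly throughout the induction, but needs to be established only once.

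For the inductive step itself, I would observe that the $(m{+}1)$-th pass of Buchberger's algorithm adds at most one new candidate $h_{ij}$, arising from a standard expression for $m_{ji}g_i - m_{ij}g_j$ built from two current candidates. By induction these current candidates have $P$ bounded by $M_m$, so the preceding proposition gives $P(h_{ij}) \leq 2^{3\cdot 2^p-2}M_m^{5\cdot 2^p-3}$. Substituting the inductive formula for $M_m$ and applying the telescoping identity above produces the claimed bound for $M_{m+1}$.

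The principal obstacle is ensuring uniformity of the degree bound across all steps: it is essential that $p$, and hence the constants $3\cdot 2^p-2$ and $5\cdot 2^p-3$ driving the recursion, do not themselves grow with $m$. Without Dube's theorem, the degrees of candidates could in principle balloon during Buchberger's algorithm, forcing $p$ to increase and invalidating the clean recursion. The whole strategy hinges on having an \emph{a priori} uniform degree bound so that the one-step proposition applies with a single fixed set of constants throughout the entire algorithm.
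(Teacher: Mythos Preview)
Your proposal is correct and follows essentially the same route as the paper: both apply Dub\'e's theorem once to obtain a uniform degree bound on all Buchberger candidates (so that the constant $p$ in the single-step proposition is fixed throughout), then feed the single-step bound $a_m = 2^{3\cdot 2^p-2}\,a_{m-1}^{5\cdot 2^p-3}$ with $a_0=1$ into the same recursion and solve it to the stated closed form. Your explicit emphasis on why the degree bound must be uniform (so that the recursion constants do not drift) is a helpful clarification, but the underlying argument is identical.
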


\begin{proof}
From Dub\'{e}'s result \cite{Dube}, we have that the degree of the polynomials in a Gr{\"o}bner basis for the ideal $I$ is bounded by $2\cdot 4^{2^{n-1}}$.

After $m$ steps of Buchberger's Algorithm, suppose the candidates for a Gr{\"o}bner basis all have coefficients with $P$ bounded by $a_m$. The previous proposition gives us the following recursion:
\[
a_m=2^{3\cdot 2^p-2}\left(a_{m-1}\right)^{5\cdot 2^p-3}
\]
where $p=\left(\begin{array}{c}rsn+2\left(2\cdot 4^{2^{n-1}}\right)\\2\left( 2\cdot 4^{2^{n-1}}\right)\end{array}\right)=\left(\begin{array}{c}rsn+4^{2^{n-1}+1}\\4^{2^{n-1}+1}\end{array}\right)$

A closed form of this recursion with $a_0=1$ is given by:
\[
a_m=\left(2^{3\cdot 2^p-2}\right)^{\left(\left(5\cdot 2^p-3\right)^{m+1}-1\right)/\left(5\cdot 2^p-4\right)}
\]
\end{proof}

Finally, by establishing an upper bound on the number of steps in Buchberger's Algorithm in our case, we arrive at our result.

\begin{thm} \label{BoundThm}
Fix $r,s,n$. Consider a prime $p$ with
\[\label{*}
p>\left(2^{3\cdot 2^q-2}\right)^{\left(\left(5\cdot 2^q-3\right)^{m+1}-1\right)/\left(5\cdot 2^q-4\right)}
\]
where $q=\left(\begin{array}{c}
rsn+4^{2^{n-1}+1}\\
4^{2^{n-1}+1}\end{array}\right)$ and $m=\left(\begin{array}{c}
rsn+2\cdot 4^{2^{n-1}}\\
2\cdot 4^{2^{n-1}}
\end{array}\right)$.

Then a sums-of-squares formula of type $[r,s,n]$ exists over an algebraically closed field of characteristic $0$ if and only if a sums-of-squares formula of type $[r,s,n]$ exists over an algebraically closed field of characteristic $p$.
\end{thm}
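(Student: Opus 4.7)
The plan is to apply Buchberger's algorithm to the explicit generating set of the ideal $I$ from Proposition 3.1 and to compare the runs of the algorithm in characteristic zero and in characteristic $p$. By the reformulation in Section 3, a sums-of-squares formula exists over an algebraically closed field of characteristic $c$ if and only if $1\notin I$ in the corresponding polynomial ring, and this can be detected from any Gröbner basis of $I$: the ideal is proper exactly when no nonzero constant appears in the basis. I will show that when $p$ exceeds the stated bound, the Buchberger computation carried out over $\mathbb{Q}$ reduces faithfully modulo $p$, so that the Gröbner bases over $\mathbb{Q}$ and over $\mathbb{F}_p$ have the same initial monomials and in particular agree on whether they contain a unit.

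First I would bound the number of iterations of Buchberger's algorithm. By Dubé's theorem applied in our setting, every polynomial produced in a Gröbner basis for $I$ has total degree at most $2\cdot 4^{2^{n-1}}$. Each iteration strictly enlarges the monomial ideal generated by the initial terms of the current basis, and all such initial terms are monomials in $rsn$ variables of degree at most $2\cdot 4^{2^{n-1}}$. Hence the total number of iterations is bounded by the number of such monomials, which is precisely $m=\binom{rsn+2\cdot 4^{2^{n-1}}}{2\cdot 4^{2^{n-1}}}$.

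Next I would invoke Proposition \ref{GrobProp}, noting that the listed generators of $I$ have coefficients in $\{0,\pm 1\}$, so the recursion starts at $a_0=1$. Iterating $m$ times gives a candidate Gröbner basis whose coefficients have $P$-value bounded by
\[
a_m=\left(2^{3\cdot 2^q-2}\right)^{\left(\left(5\cdot 2^q-3\right)^{m+1}-1\right)/\left(5\cdot 2^q-4\right)},
\]
which is exactly the right-hand side of the inequality in the statement.

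The final step is the comparison. If $p>a_m$, then for any rational coefficient $a/b$ (in lowest terms) appearing during the $\mathbb{Q}$-run of Buchberger's algorithm, both $|a|<p$ and $|b|<p$; hence $b$ is invertible modulo $p$ and every nonzero $a/b$ reduces to a nonzero element of $\mathbb{F}_p$. This allows one to carry out the identical sequence of $S$-pair computations and standard expressions over $\mathbb{F}_p$ without any initial term or leading coefficient unexpectedly collapsing, producing a Gröbner basis over $\mathbb{F}_p$ that is the reduction modulo $p$ of the one over $\mathbb{Q}$. In particular, $1$ lies in the Gröbner basis over $\mathbb{Q}$ if and only if it does over $\mathbb{F}_p$, giving the equivalence between $\bar{\mathbb{Q}}$ and $\bar{\mathbb{F}}_p$; Theorem 4.1 then extends the equivalence to arbitrary algebraically closed fields of the two characteristics. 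The main obstacle will be verifying that the two runs remain in lockstep, i.e.\ that the same monomial-order choice of initial terms is available in both characteristics and that no cancellation modulo $p$ forces the algorithm down a divergent branch; this is exactly what the bound $p>a_m$ is engineered to preclude.
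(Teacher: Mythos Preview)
Your proposal is correct and follows essentially the same approach as the paper: bound the number of Buchberger iterations by the monomial count $m$ via Dub\'e's degree bound, apply Proposition~\ref{GrobProp} to bound $P$ of all coefficients by $a_m$, and then observe that for $p>a_m$ no numerator or denominator vanishes modulo $p$, so the Gr\"obner basis over $\mathbb{Q}$ reduces faithfully to one over $\mathbb{F}_p$ and contains a nonzero constant in one case iff it does in the other. Your explicit remark that the two runs stay in lockstep (same initial terms, no unexpected cancellations) spells out a point the paper treats more tersely, but the argument is the same.
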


\begin{proof}
Once again, we use that the degree of polynomials in a Gr{\"o}bner basis is bounded by $2\cdot 4^{2^{n-1}}$. Note that, in each step Buchberger's Algorithm, the ideal generated by the initial terms of $g_1,...,g_t,h_{ij}$ is strictly larger than the ideal generated by the initial terms $g_1,...,g_t$. Since this is a monomial ideal, this means that the number of steps in Buchberger's algorithm is bounded by the number of monomials of degree $\leq 2\cdot 4^{2^{n-1}}$.

Thus the number of steps in Buchberger's Algorithm is bounded by:
\[
\left(\begin{array}{c}
rsn+2\cdot 4^{2^{n-1}}\\
2\cdot 4^{2^{n-1}}
\end{array}\right)
\]
Consider $I$ as above, the relevant ideal for sums-of-squares formulas. Every polynomial in a Gr{\"o}bner basis for $I$ has coefficients with $P$ bounded by \ref{*}. Then, for
\[p>\left(2^{3\cdot 2^q-2}\right)^{\left(\left(5\cdot 2^q-3\right)^{m+1}-1\right)/\left(5\cdot 2^q-4\right)}
\] 
(as in the statement of the theorem), we have that every coefficient of polynomials in the Gr{\"o}bner basis is nontrivial if and only if it is nontrivial when reduced modulo $p$. This is because the coefficients are all rational numbers, and neither the numerator nor denominator will become $0$ modulo $p$.

Recall that $I=S=F[\{x_{ijk}\}]$ if and only if no sums-of-squares formula of type $[r,s,n]$ exists over an algebraically closure of $F$. Considering Gr{\"o}bner bases, $I=S$ if and only if there is a non-zero constant in a Gr{\"o}bner basis, which we have shown is equivalent between the characteristic $0$ and characteristic $p$ cases for large enough $p$. Thus we have arrived at our result.
\end{proof}

Combining this result with those of the previous chapter, we have:

\begin{thm}
Existence of a sums-of-squares formula over an algebraically closed field is computable.
\end{thm}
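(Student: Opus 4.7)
The plan is to assemble the final statement directly from Theorem \ref{BoundThm} and Theorem \ref{Ch4Bound}, which together already supply an effective procedure. Given a type $[r,s,n]$, existence over an algebraically closed field depends only on the characteristic (by the first theorem of Section~4), so it suffices to decide, for each of characteristic $0$ and each prime $p$, whether a formula exists. The strategy is to reduce all of these infinitely many cases to a single finite computation.

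First, from $r,s,n$ compute the explicit constant
\[
N=\left(2^{3\cdot 2^q-2}\right)^{\left(\left(5\cdot 2^q-3\right)^{m+1}-1\right)/\left(5\cdot 2^q-4\right)}
\]
with $q$ and $m$ as in Theorem \ref{BoundThm}. This is a primitive recursive function of the input. By Theorem \ref{BoundThm}, existence over an algebraically closed field of characteristic $0$ is equivalent to existence over $\bar{\mathbb{F}}_p$ for every prime $p>N$. Thus we need only handle the finitely many primes $p\leq N$, together with one chosen prime $p_0>N$ which witnesses the characteristic $0$ case.

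Second, for each such prime $p$ we invoke Theorem \ref{Ch4Bound}: existence over $\bar{\mathbb{F}}_p$ is equivalent to existence over $\mathbb{F}_{p^k}$ for some $k\leq 2\cdot 17^{rsn+r^2s^2}$. Each such field is finite, so we can enumerate all tuples $(\alpha_{ijk})\in\mathbb{F}_{p^k}^{rsn}$ and test the finitely many polynomial identities listed in Proposition 3.1 against each tuple. This yields, for every $p\leq N$ and for $p_0$, a definite yes/no answer in finitely many steps, and by the previous paragraph the answer at $p_0$ also settles the characteristic $0$ case.

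Since each of the above reductions is effective, concatenating them gives an algorithm, which proves the theorem. There is no real mathematical obstacle remaining: the work has been done in Theorems \ref{Ch4Bound} and \ref{BoundThm}. The only thing to verify carefully is that the meaning of \emph{computable} here is the uniform decision problem indexed by characteristic, rather than a single decision over a single ``universal'' algebraically closed field. Once that is acknowledged, the proof is a short chain of applications of the previously established effective bounds, and it is worth remarking, as the paper does, that the constants $N$ and $2\cdot 17^{rsn+r^2s^2}$ render the procedure astronomically impractical even for small $(r,s,n)$.
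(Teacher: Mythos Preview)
Your proof is correct and follows essentially the same approach as the paper: reduce characteristic $0$ to a sufficiently large prime via Theorem~\ref{BoundThm}, and reduce each finite characteristic to a finite search via Theorem~\ref{Ch4Bound}. Your framing is slightly more elaborate---you decide all characteristics at once by handling every prime $p\le N$ together with one witness $p_0>N$, whereas the paper simply treats the two inputs ``characteristic $0$'' and ``characteristic $p$'' separately---but the mathematical content is identical.
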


\begin{proof}
For an algebraically closed field of finite characteristic $p$, we need only check the finitely many fields over $\mathbb{F}_p$ with degree given by \ref{Ch4Bound}. Thus there are finitely many possible coefficients, and this is computable.

For an algebraically closed field of characteristic $0$, we check existence over an algebraically closed field of finite characteristic $p$ for ``large enough'' $p$, as above.
\end{proof}

Unfortunately, the bounds are far to large for this to be practically computable.

\end{document}